\documentclass[11pt,reqno]{amsart}
\usepackage{amscd,amssymb,amsmath,amsthm}
\usepackage[english]{babel}
\usepackage[arrow,matrix]{xy}
\usepackage{graphicx}
\usepackage{cite}
\usepackage{bbm}%fuer die dicke Indikatorfunktionseins -> \mathbbm
\usepackage{color}%for the remarks
\usepackage{MnSymbol} %fuer die ``doppelten [ ]'' Klammern um m
\usepackage{enumerate}
\usepackage{enumitem}

\topmargin=0.1in \textwidth5.8in \textheight7.8in
\newtheorem{thm}[subsection]{Theorem}

\newtheorem{pro}[subsection]{Proposition}

\theoremstyle{definition}

\newtheorem{defn}[subsection]{Definition}

\numberwithin{equation}{section} \setcounter{tocdepth}{1}

\newcommand{\F}{{\mathcal F}}

\newcommand{\E}{{\mathcal E}}
\newcommand{\V}{{\mathcal V}}

\newcommand{\bea}{\begin{eqnarray}}
\newcommand{\eea}{\end{eqnarray}}

\newcommand{\N}{\mathbb N}
\newcommand{\Z}{\mathbb{Z}}

\newcommand{\R}{\mathbb{R}}

\newcommand{\PP}{\mathbb{P}}

\def \> {\Rightarrow}
\def \0 {\emptyset}

%%%%%%%%%%%%%%%%%%%%%%%%%%%%%%%%%%%%%%%%%%%%%%%%%%%%%%%%%

%%%%%%%%%%%%%%%%%%%%%%%%%%%%%%%%%%%%%%%%%%%%%%%%%%%%%%

%\DeclareMathOperator{\Hom}{Hom} \DeclareMathOperator{\rank}{rank}
%\DeclareMathOperator{\im}{im} \DeclareMathOperator{\coker}{coker}
%\DeclareMathOperator{\deg}{deg}
%\DeclareMathOperator{\id}{id} \DeclareMathOperator{\mult}{mult}
%\DeclareMathOperator{\supp}{supp}
%\DeclareMathOperator{\genus}{genus}
%\DeclareMathOperator{\min}{min}

%\doublespacing

%%%%%%%%% prob abbreviations %%%%%%%%%%%%%%%%%
%\newcommand{\E}{\mathbb E} % Erwartungswert
\newcommand{\W}{\mathbb P} % W'keit
%\newcommand{\N}{\mathbb N} % natuerliche Zahlen
%\newcommand{\R}{\mathbb R} % reelle Zahlen
 %fuer Indikatorfunktion fuer beliebige Mengen
%%%%%%%%%%%%%%%%%%%%%%%%%%%%%%%%%%%%%%%%%%%%%%

%%%%%%%%%%%%%%%%%%%%%%%%%%%%%%

\begin{document}
\title[On the Random Dynamics Of Volterra Quadratic Operators]{On the Random Dynamics Of Volterra Quadratic Operators}

\author{U.U. Jamilov, M. Scheutzow, M. Wilke-Berenguer, }

\address{M.\ Scheutzow and M. Wilke-Berenguer \\ Institut f\"ur Mathematik, MA 7-5, Fakult\"at II,
        Technische Universit\"at Berlin, Stra\ss e des 17.~Juni 136, 10623 Berlin, FRG;}
\email {ms@math.tu-berlin.de, wilkeber@math.tu-berlin.de}
  \address{U.\ U.\ Jamilov\\ Institute of mathematics at the National University of Uzbekistan,
29, Do'rmon Yo'li str., 100125, Tashkent, Uzbekistan.}
\email {jamilovu@yandex.ru}

\date{\today}

  \maketitle

\begin{abstract}
     We consider random dynamical systems generated by a special class of Volterra quadratic stochastic operators on the simplex $S^{m-1}$.
We prove that in contrast to the deterministic set-up the trajectories of the random dynamical system almost surely converge to one of the vertices of the simplex $S^{m-1}$ implying the survival of only one species. We also show that the minimal random point attractor of the system equals the set of all vertices. The convergence proof relies on a martingale-type limit theorem which we prove in the appendix.
\end{abstract}

\vskip 0.5 truecm
\maketitle

{\bf Mathematics Subject Classification (2010):} Primary 37H99,
Secondary 37N25, 92D25.

\vskip 0.5 truecm

{\bf Key words.} Quadratic stochastic operator, Volterra and non-Volterra operators,
evolutionary operator, random attractor, global attractor, point attractor.

\section{Introduction}

The concept of a quadratic stochastic operator (QSO) and its application in a biological context were first established by S.N.~Bernstein in \cite{Br}. Since then the theory has been further deepend as they frequently occur in mathematical models of genetics, where QSOs serve as a tool for the study of dynamical properties and modeling, see  \cite{NN0}--\cite{GMR}, \cite{K1}, \cite{K2}, \cite{Lyu1}, \cite{RJ1}, \cite{RJ2}, \cite{Ul}--\cite{Zakh}. While they were originally introduced as ``evolutionary operators'' to describe the dynamics of gene frequencies for given laws of heredity in mathematical population genetics, QSOs and the dynamical systems they describe have become interesting objects of study in their own right from a purely mathematical point of view (see\cite{Lyu1} for a comprehensive account).

In the description of the genetic evolution of large populations QSOs arise as follows: Consider a population with $m \in \mathbb N$ different genetic types, where every individual in this population belongs to precisely one of the species $\lsem m \rsem :=\{1, 2, \dots, m\}$. Let $\mathbf{x}^0 = (x^0_1,...,x^0_m)$ be a probability distribution on $\lsem m \rsem$ describing the relative frequencies of the genetic types within the whole population in the initial generation. Denote by $p_{ij,k}$ the conditional probability that two individuals of type $i$, resp. $j$, produce an offspring of type $k$ given they interbreed and assume that the population is large enough for frequency fluctuations to be neglectable. Presuming a free population, i.e. absence of sexual differentiation and the statistical independence of genotypes for breeding, the distribution $\mathbf{x}' = (x'_1,...,x'_m) $ of the (expected) gene frequencies in the next generation is given by
\begin{equation}
\label{ng}
x'_k =\sum\limits_{i,j=1}^m p_{ij,k}x^0_ix^0_j, \  \  k \in \lsem m \rsem.
\end{equation}

The association $\mathbf{x}^0 {\mapsto} \mathbf{x}'$ defines a map {$V: S^{m-1} \to S^{m-1}$} called {\em evolutionary operator}. The population evolves by starting from an arbitrary frequency distribution $\mathbf{x}^0$, then passing to the state
$\mathbf{x}' = V (\mathbf{x}^0)$ in the next  {``}generation", then to the state
$\mathbf{x}'' = V (V (\mathbf{x}^0))$, and so on.
Thus {the evolution of gene frequencies} in this population can be considered as a dynamical system
$$
\mathbf{x}^0, \  \ \mathbf{x}' = V (\mathbf{x}^0), \  \ \mathbf{x}'' = V^2(\mathbf{x}^0), \  \ \mathbf{x}'''= V^3(\mathbf{x}^0), \  \ . . .
$$
Note that {$V$} as defined by (\ref{ng}) is a non-linear (quadratic) operator.  Higher dimensional dynamical
systems, as the one resulting from the observations above for $m \geq 3$, are important, but only relatively few dynamical phenomena are thoroughly comprehended (\!\!\cite{D}, \cite{E}, \cite{R}).

{One of the main objects of study for dynamical systems and QSOs is the asymptotic behaviour of their
trajectories, depending on the initial value. However, this has so far only been determined for certain particular subclasses of QSOs. One such subclass that arises naturally in the biological context is given by the additional restriction
\begin{equation}
\label{koefvolt}
p_{ij,k}=0, \ \ \mbox {if} \ \ k\notin \{i,j\}, \ \ i,j,k \in \lsem m \rsem.
\end{equation}
These QSOs describe a reproductory behaviour where the offspring is a genetic copy of one of its parents and are called {\em Volterra operators}. The asymptotic behaviour of trajectories of this kind of QSOs were analysed in \cite{RN1}, \cite{RN2} and \cite{RNEs} using the theory of Lyapunov functions and tournaments. In \cite{MAT} infinite dimensional Volterra operators were introduced and their dynamics studied. In \cite{GZ1}-\cite{GGJ},\cite{Zakh} the ergodicity problems of the Volterra operators were considered.
In \cite{GJM2} and \cite{RJ3} a Volterra operator of a bisexual population was examined.

However, in the non-Volterra case (i.e., where condition (\ref{koefvolt}) is violated), many questions remain open and there seems to be no general theory
available. See \cite{GMR} for a recent review of QSOs.\\

In all of the above-mentioned references the authors investigated deterministic trajectories of a QSO. However, it seems natural to consider a randomization of this procedure and explore the random dynamical system resulting from it. This can be done, e.g., by using a random iteration of operators of a given finite or countably infinite set of QSOs.

As a first step in this direction we investigate the trajectories of a sequence of independent and identically distributed Volterra QSOs in the present work. We prove that for any initial point from the simplex of probability distributions the random trajectory converges almost surely to one of the vertices of the simplex. This is far from being obvious since the set of Volterra QSOs considered may well contain operators that do not have this property and might, indeed, not converge at all. Furthermore, we show that the set of vertices of the simplex coincides with the minimal random point attractor of the corresponding random dynamical system.

Note that for the biological interpretation our results show that such a mechanism does not allow for coexistence but yields almost sure extinction of all but one species (Theorem \ref{thm:mr}). The corresponding results in the deterministic setting on the other hand cannot generally rule out coexistence in the long run (see, e.g., Proposition \ref{RN2} ). Indeed, some of the QSOs included in the set we consider for the random setting, e.g. those studied in \cite{Zakh}, model a very distinct deterministic behaviour. They describe a population where a species will come to the verge of extinction only to recover to the point where all other species are almost annihilated, after which the cycle repeats indefinitely, not yielding a stable situation.

The paper is organized as follows. In Section \ref{sec:preliminaries} we recall definitions and well
known results from the theory of Volterra QSOs and the definition of random QSOs.
In Section \ref{sec:mr} we define a special class of Volterra QSOs and show the almost sure convergence of the random iteration of these operators.
Finally, in Section \ref{sec:att}, we identify the minimal random point attractor of the resulting random dynamical system. In the appendix, we formulate and prove a martingale-type limit theorem which we need for the proof of the main result of Section \ref{sec:mr}.

\section{{Preliminaries} {and known results}}
\label{sec:preliminaries}
 A quadratic stochastic operator (QSO) on $\lsem m \rsem =\{1,\ldots,m\}$
 is a mapping $V$ of the simplex
\begin{equation}\label{simp}
S^{m-1}={\Big\{}\mathbf{x}=\big(x_1,...,x_m\big)\in {\mathbb{R}^m}: x_i\geq 0,\ \,{\forall  i \in \lsem m \rsem, }\, \sum_{i=1}^mx_i=1 {\Big\}}
\end{equation}
into itself, of the form $V(\mathbf{x})=\mathbf{x}' \in S^{m-1}$, where
\begin{equation}
\label{kso}
x'_k=\sum_{i,j \in \lsem m \rsem}p_{ij,k}x_i x_j, \ \ k \in \lsem m \rsem,
\end{equation}
{and the} $p_{ij,k}$  {satisfy}
\begin{equation}\label{koefkso}
p_{ij,k}=p_{ji,k}\geq 0, \quad  \ \ \sum_{k=1}^mp_{ij,k}=1, \ \ i,j,k \in \lsem m \rsem.
\end{equation}
The trajectory (orbit) $\{\mathbf{x}^{(n)}\}_{n \in \N_0}$ of $V$ for an initial {value} $\mathbf{x}^{(0)}\in S^{m-1}$  is defined by
\begin{equation}\label{trajectory}
\mathbf{x}^{(n+1)}=V(\mathbf{x}^{(n)})={V^{n+1}(\mathbf{x}^{(0)})}, \quad n=0,1,2,\dots
\end{equation}

The following notation will be used throughout this paper. We let ${\mathrm{int}}S^{m-1}$ denote the  interior and  $\partial S^{m-1} $ the boundary of $ S^{m-1}$, i.e.
\begin{equation*}
 {\mathrm{int}}S^{m-1}:=\{\mathbf{x}\in S^{m-1}:x_1x_2\cdots x_m>0\}, \text{ and }  \partial S^{m-1}:=S^{m-1} \setminus {\mathrm{int}} S^{m-1}.
\end{equation*}
Furthermore let $\mathbf{e}_i=(\delta_{1i},\delta_{2i}, \cdots, \delta_{mi})$ for $i=1,2,\cdots,m$ be used for the $i$th vertex of the simplex $S^{m-1},$ where $\delta_{ij}$ is the Kronecker symbol.  $\omega(\mathbf{x}^{(0)})$ denotes the  $\omega$-limit set of the trajectory (\ref{trajectory}).

A point $\mathbf{x}\in S^{m-1}$ is called a fixed point of $V$ if $V(\mathbf{x})=\mathbf{x}.$
Note that {our QSOs are continuous operators and that the simplex over a finite set is compact and convex, so that by the Brouwer Fixed-Point Theorem there is always at least one fixed point. Further, if a trajectory generated by the QSO $V$ converges to $\mathbf{x}$ then, by continuity, $\mathbf{x}$ is a fixed point.

\vskip 0.5 truecm

{ \it Volterra Quadratic Stochastic Operators}\\

Let $V$ be a quadratic stochastic operator on the simplex $S^{m-1}.$

 \begin{defn}\label{dvqso}
  The quadratic stochastic operator $V$  is called \emph{Volterra operator}, if $p_{ij,k}=0 $ for any $k\notin \{i,j\}$, $ \ \ i,j,k \in \lsem m \rsem$.
 \end{defn}
 Evidently for any Volterra QSO
 \begin{equation} \label{volkoef}
 p_{ii,i}=1, \mbox{ and }     p_{ik,k}+p_{ki,i}= p_{ik,k}+p_{ik,i}=1 \ \ \mbox {for all } i,k \in \lsem m \rsem,  i\neq k.
 \end{equation}
 A Volterra QSO $V$ defined on $S^{m-1}$ therefore has the following form
 \begin{equation}\label{volgf}
(V \mathbf{x})_k=x_k^2+2\sum_{i\in \lsem m \rsem,i\neq k} p_{ik,k}x_ix_k, \qquad k \in \lsem m \rsem.
\end{equation}

\begin{pro} \cite{RN1} \label{RN1} A QSO $V$ is a Volterra operator if and only if
 \begin{equation}\label{gvol}
(V \mathbf{x})_k=x_k(1+\sum_{i=1}^ma_{ki}x_i)
\end{equation}
where $A=(a_{ij})_1^m$ is a skew-symmetric matrix with $a_{ki}=2p_{ik,k}-1$ for $i\neq k,$ $a_{ii}=0$   and $|a_{ij}|\leq 1.$ Here $i,j,k\in \lsem m \rsem.$
\end{pro}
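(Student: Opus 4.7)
The plan is to prove both directions by direct algebraic comparison of coefficients, using the identity $\sum_{i=1}^m x_i=1$ which holds on $S^{m-1}$ to insert or remove the linear term $x_k = x_k \cdot \sum_i x_i$.

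\medskip

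For the \emph{only if} direction, I would start from the explicit form (\ref{volgf}) available for any Volterra operator and rewrite
\[
(V\mathbf{x})_k = x_k^2 + 2x_k\sum_{i\neq k} p_{ik,k} x_i
= x_k\Bigl(x_k + \sum_{i\neq k}(2p_{ik,k})x_i\Bigr).
\]
Then I would use $x_k = \bigl(\sum_i x_i\bigr) - \sum_{i\neq k} x_i$ to turn the bracket into $\sum_i x_i + \sum_{i\neq k}(2p_{ik,k}-1)x_i$. Setting $a_{ki}:=2p_{ik,k}-1$ for $i\neq k$ and $a_{kk}:=0$ then gives exactly (\ref{gvol}). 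The bound $|a_{ki}|\leq 1$ is immediate from $0\leq p_{ik,k}\leq 1$, and skew-symmetry follows from the identity $p_{ik,k}+p_{ki,i}=1$ in (\ref{volkoef}): $a_{ki}+a_{ik}=2(p_{ik,k}+p_{ki,i})-2=0$.

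\medskip

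For the \emph{if} direction, I would expand (\ref{gvol}) as $(V\mathbf{x})_k=x_k\sum_i x_i + x_k\sum_i a_{ki}x_i = \sum_i(1+a_{ki})x_ix_k$ and, by matching coefficients against the generic form (\ref{kso}) with the symmetry condition $p_{ij,k}=p_{ji,k}$, read off
\[
p_{kk,k}=1, \qquad p_{ik,k}=p_{ki,k}=\tfrac{1+a_{ki}}{2}\ \ (i\neq k),
\]
and $p_{ij,k}=0$ whenever $k\notin\{i,j\}$. This last equality is precisely the Volterra condition, so it remains only to verify that these numbers satisfy (\ref{koefkso}). Nonnegativity follows from $|a_{ki}|\leq 1$, symmetry $p_{ij,k}=p_{ji,k}$ is built in, and the stochasticity $\sum_k p_{ij,k}=1$ is trivial when $i=j$ and, for $i\neq j$, reduces via skew-symmetry to
\[
p_{ij,i}+p_{ij,j}=\tfrac{1+a_{ij}}{2}+\tfrac{1+a_{ji}}{2}=1.
\]

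\medskip

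There is no real obstacle here: the whole proof is bookkeeping, and the only place where the hypotheses are genuinely used is (i) the constraint $\sum_i x_i=1$, which lets me pass freely between the forms $x_k^2+2\sum_{i\neq k}p_{ik,k}x_ix_k$ and $x_k(1+\sum_i a_{ki}x_i)$, and (ii) the Volterra identity $p_{ik,k}+p_{ki,i}=1$ from (\ref{volkoef}), which is exactly equivalent to skew-symmetry of $A$ under the substitution $a_{ki}=2p_{ik,k}-1$. The mild subtlety worth flagging explicitly is that the representation (\ref{gvol}) is an identity of functions on the simplex (not on all of $\mathbb{R}^m$), because the substitution $1=\sum_i x_i$ is what converts the homogeneous quadratic expression into the affine-looking product $x_k(1+\sum_i a_{ki}x_i)$.
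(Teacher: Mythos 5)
The paper states this proposition as a cited result from Ganikhodzhaev \cite{RN1} and gives no proof of its own, so there is no in-text argument to compare against. Evaluated on its own merits, your proof is correct: the only-if direction is the straightforward computation from (\ref{volgf}) together with the identity $p_{ik,k}+p_{ki,i}=1$ from (\ref{volkoef}), and the if direction constructs a coefficient array $p_{ij,k}$ satisfying (\ref{koefkso}) and the Volterra condition (\ref{koefvolt}).

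One step you pass over lightly and could make airtight is the phrase ``matching coefficients'' in the if direction. A priori, since (\ref{gvol}) is only an identity on the simplex, it is not obvious that the $p_{ij,k}$ are determined rather than merely chosen. They are in fact determined: after homogenizing (\ref{gvol}) to $\sum_i(1+a_{ki})x_ix_k$, the symmetric quadratic form $\sum_{i,j}p_{ij,k}x_ix_j$ with $p_{ij,k}=p_{ji,k}$ is uniquely recoverable from its restriction to $S^{m-1}$ --- evaluate at the vertices $\mathbf e_i$ to get $p_{ii,k}$, and at the midpoints $\tfrac12(\mathbf e_i+\mathbf e_j)$ to get $p_{ij,k}$ for $i\neq j$. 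So the vanishing of $p_{ij,k}$ for $k\notin\{i,j\}$ is a consequence of (\ref{gvol}), not an ansatz. With that observation your identification of coefficients is rigorous rather than heuristic, and the rest (nonnegativity from $|a_{ij}|\le 1$, stochasticity from skew-symmetry) goes through exactly as you write. This is presumably the argument in \cite{RN1} as well.
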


The space of skew-symmetric matrices generating Volterra operators, is parameterized by the cube $[-1,1]^{m(m-1)/2}$.
The extremal points of the cube are its vertices. The quadratic stochastic operator $V$ is called an {\it extremal Volterra operator},
if the corresponding skew-symmetric matrix is a vertex of the cube, i.e. $a_{ij}=-1  \mbox{ or } 1 $ for any  $i\neq j.$

It is evident that the total number of the extremal Volterra QSO is equal to $2^{\frac{m(m-1)}{2}}.$

\begin{pro} \cite{RN1}\label{RN2} Let $V$ be a Volterra QSO. Then
\begin{itemize}
 \item $V$ is a homeomorphism on $S^{m-1}$;
 \item If $\mathbf{x}$ is not a fixed point of $V$, then $\omega(\mathbf{x})\subset \partial S^{m-1}$.
% \item The "negative" trajectories always converge.\\
\end{itemize}
\end{pro}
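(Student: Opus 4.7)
For the first claim, $V$ is a polynomial map and hence continuous, and $S^{m-1}$ is compact Hausdorff, so it suffices to prove bijectivity (continuity of the inverse is then automatic). The main structural observation is that $V$ preserves every face of the simplex: from $(V\mathbf{x})_k = x_k(1 + \sum_i a_{ki}x_i)$ one reads off that $x_k = 0 \Rightarrow (V\mathbf{x})_k = 0$, and on any such face $V$ restricts to a Volterra QSO of lower dimension (governed by the corresponding principal submatrix of $A$). I would proceed by induction on $m$: the base case $m=2$ reduces to $V(x) = x + a\,x(1-x)$ with $a \in [-1,1]$, which is directly a homeomorphism of $[0,1]$. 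For the inductive step, face-preservation and the inductive hypothesis reduce the claim to bijectivity on ${\mathrm{int}}\,S^{m-1}$; for this I would compute the Jacobian of $V$ in barycentric coordinates and show, using $|a_{ij}|\le 1$ and skew-symmetry, that it is non-singular on the interior, so $V|_{\mathrm{int}}$ is a local diffeomorphism. A Brouwer-degree calculation at a vertex (each vertex is fixed with trivial linearization) then closes the argument.

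For the second claim the main tool is a Lyapunov function. Assume first that $V$ admits an interior fixed point $\mathbf{x}^* \in {\mathrm{int}}\,S^{m-1}$, equivalently $\sum_i a_{ki} x_i^* = 0$ for every $k$. Set $b_k(\mathbf{x}) := \sum_i a_{ki}x_i$ and define
\begin{equation*}
\phi_{\mathbf{x}^*}(\mathbf{x}) \;:=\; \prod_{k=1}^m x_k^{x_k^*}.
\end{equation*}
Then $\log\phi_{\mathbf{x}^*}(V\mathbf{x}) - \log\phi_{\mathbf{x}^*}(\mathbf{x}) = \sum_k x_k^* \log(1+b_k(\mathbf{x}))$, and Jensen's inequality (strict concavity of $\log$, weights $x_k^*>0$ summing to $1$) gives
\begin{equation*}
\log\phi_{\mathbf{x}^*}(V\mathbf{x}) - \log\phi_{\mathbf{x}^*}(\mathbf{x}) \;\le\; \log\!\Bigl(1+\sum_k x_k^* b_k(\mathbf{x})\Bigr) \;=\; 0,
\end{equation*}
where the last equality uses skew-symmetry of $A$ together with $b_i(\mathbf{x}^*)=0$ to rewrite $\sum_k x_k^* b_k(\mathbf{x}) = -\sum_i x_i b_i(\mathbf{x}^*) = 0$. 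Equality forces $b_k(\mathbf{x})$ to be constant in $k$, and the identity $\sum_k x_k b_k(\mathbf{x}) = 0$ (again skew-symmetry) then makes that constant vanish, so $\mathbf{x}$ is a fixed point. Hence $\phi_{\mathbf{x}^*}$ strictly decreases along any non-stationary orbit, and a weighted AM-GM shows it attains its unique global maximum on $S^{m-1}$ at $\mathbf{x}^*$.

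Now if $\mathbf{x}$ is not a fixed point and $\mathbf{y}^* \in \omega(\mathbf{x}) \cap {\mathrm{int}}\,S^{m-1}$, then since $\phi_{\mathbf{x}^*}(V^n\mathbf{x})$ converges and $\omega(\mathbf{x})$ is $V$-invariant, $\phi_{\mathbf{x}^*}$ is constant on $\omega(\mathbf{x})$, so strict decrease forces every point of $\omega(\mathbf{x})$ to be a fixed point of $V$; in particular $\mathbf{y}^*$ is itself an interior fixed point. Repeating the construction with $\mathbf{y}^*$ in place of $\mathbf{x}^*$ gives a Lyapunov function $\phi_{\mathbf{y}^*}$ whose strict global maximum on $S^{m-1}$ is at $\mathbf{y}^*$. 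Since $\mathbf{x}\ne \mathbf{y}^*$, strict decrease of $\phi_{\mathbf{y}^*}$ along the orbit yields $\phi_{\mathbf{y}^*}(V^n\mathbf{x}) \le \phi_{\mathbf{y}^*}(\mathbf{x}) < \phi_{\mathbf{y}^*}(\mathbf{y}^*)$ for every $n\ge 0$, contradicting any subsequence $V^{n_k}\mathbf{x}\to \mathbf{y}^*$. The principal obstacle is that this whole strategy presupposes the existence of some interior fixed point to start the Lyapunov construction; when $\mathrm{Fix}(V)\subset \partial S^{m-1}$, I would complete the proof by induction on $m$, using face-preservation to restrict to a lower-dimensional Volterra QSO on any face that meets $\omega(\mathbf{x})$, and apply the inductive hypothesis there. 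The case $\mathbf{x}\in\partial S^{m-1}$ already follows from face-preservation plus the same induction.
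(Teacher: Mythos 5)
The paper does not supply a proof of this proposition --- it is imported from Ganikhodzhaev's 1993 article \cite{RN1} --- so there is no internal proof to compare against; I can only assess your argument on its own merits. The homeomorphism part is a reasonable sketch. The core computation does work: writing $D_{kj}=\delta_{kj}(1+b_k(\mathbf{x}))+x_k a_{kj}$ for the Jacobian and supposing $Dv=0$, multiplying the $k$-th equation by $v_k/x_k$ and summing over $k$ kills the $A$-contribution by skew-symmetry and leaves $\sum_k(1+b_k)v_k^2/x_k=0$, which forces $v=0$ on the interior. For the finish, a properness-plus-simple-connectivity (covering map) argument on ${\mathrm{int}}S^{m-1}$ is cleaner than a degree computation at a vertex, since the vertices lie on the boundary where the face-preservation structure obscures the local degree; but that is a fillable gap. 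The Lyapunov computation in the presence of an interior fixed point $\mathbf{x}^*$ is correct and is indeed the classical mechanism of \cite{RN1}.

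The genuine gap is the case $\mathrm{Fix}(V)\cap{\mathrm{int}}S^{m-1}=\emptyset$, which you flag yourself and propose to dispatch by induction on $m$ via face-restriction. That induction does not work when the starting point $\mathbf{x}$ is interior: the inductive hypothesis applied to a face $F$ controls $\omega(\mathbf{x}')$ for $\mathbf{x}'\in F$, not $\omega(\mathbf{x})$ for $\mathbf{x}\notin F$, and the very thing you are trying to exclude --- an interior point of $\omega(\mathbf{x})$ --- is what prevents you from locating $\omega(\mathbf{x})$ inside any face in the first place. This case genuinely occurs: already for $m=2$ with $a_{12}=1$ the only fixed points are the two vertices, and for $m=3$ one can choose $A$ skew-symmetric with $\ker A$ spanned by a vector missing the open simplex. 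The standard repair is game-theoretic: by the minimax theorem the symmetric zero-sum game with matrix $A$ has an optimal strategy $\mathbf{p}\in S^{m-1}$ with $b_i(\mathbf{p})\geq 0$ for all $i$, and $\phi_{\mathbf{p}}(\mathbf{x})=\prod_k x_k^{p_k}$ is then still non-increasing along orbits by the very same Jensen step, since $\sum_k p_k b_k(\mathbf{x})=-\sum_i x_i b_i(\mathbf{p})\leq 0$. The delicate part --- showing this Lyapunov function is strictly decreasing where it needs to be, now that $\mathbf{p}$ may lie on the boundary and $\phi_{\mathbf{p}}$ vanishes on part of $\partial S^{m-1}$ --- is precisely the tournament analysis carried out in \cite{RN1}, and it is what your sketch omits.
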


\begin{pro}\label{volotsen}
For any Volterra operator  $V$ and any $k \in \lsem m \rsem$, we have
 \begin{equation}\label{volest}
(V \mathbf{x})_k\leq 2x_k.
\end{equation}
\end{pro}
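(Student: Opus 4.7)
The plan is to factor out $x_k$ from the expression for $(V\mathbf{x})_k$ and then bound the remaining factor by $2$ using the constraints on the coefficients, specifically either $p_{ik,k} \leq 1$ or the bound $|a_{ki}| \leq 1$ from Proposition \ref{RN1}. Both forms (\ref{volgf}) and (\ref{gvol}) give essentially the same estimate, so I would pick whichever is more compact.

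Using the representation (\ref{gvol}) from Proposition \ref{RN1}, I would write
\[
(V\mathbf{x})_k = x_k\Bigl(1 + \sum_{i=1}^m a_{ki} x_i\Bigr).
\]
Since $a_{kk}=0$ and $|a_{ki}| \leq 1$ for all $i$, and since $\mathbf{x} \in S^{m-1}$ forces $x_i \geq 0$ with $\sum_i x_i = 1$, I would estimate
\[
\sum_{i=1}^m a_{ki} x_i \leq \sum_{i=1}^m x_i = 1,
\]
which yields $(V\mathbf{x})_k \leq x_k \cdot 2 = 2 x_k$.

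Alternatively, starting directly from (\ref{volgf}) and using $p_{ik,k} \leq 1$, one gets the slightly sharper bound
\[
(V\mathbf{x})_k = x_k\Bigl(x_k + 2\sum_{i \neq k} p_{ik,k} x_i\Bigr) \leq x_k\bigl(x_k + 2(1-x_k)\bigr) = x_k(2-x_k) \leq 2x_k.
\]

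There is no genuine obstacle here; the statement is a one-line consequence of the explicit formula for Volterra operators combined with the simplex constraint. The only thing to watch is to apply the bound $|a_{ki}| \leq 1$ (or $p_{ik,k} \leq 1$) uniformly in $i$, which is justified by Proposition \ref{RN1} (resp.\ by (\ref{koefkso})).
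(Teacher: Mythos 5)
Your proof is correct and follows essentially the same route as the paper, which simply notes that the bound is an immediate consequence of the representation $(V\mathbf{x})_k = x_k(1+\sum_i a_{ki}x_i)$ with $|a_{ki}|\leq 1$ from Proposition \ref{RN1}. Your first argument is exactly the calculation the paper leaves to the reader, and the alternative via (\ref{volgf}) is a harmless variant.
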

\begin{proof}
The proof immediately follows from Proposition \ref{RN1}.
\end{proof}

\vskip 0.5 truecm

{ \it Random Quadratic Stochastic Operators}\\

  In this subsection we recall the definition of a {\it random quadratic stochastic operator} following \cite{NN}.
Let $\Upsilon$  be  the set of all quadratic stochastic operators defined on $S^{m-1}$. Since every QSO is represented by a cubic
matrix $(p_{ij,k})_{i,j,k\in \lsem m \rsem}$ the set $\Upsilon$ is compactly embedded in $\R^{m^3}$. Let $\mathcal{H}$
be the Borel $\sigma$-algebra induced on the set $\Upsilon$.

\begin{defn}\cite{NN}\label{NN}
Consider a probability space $(\Omega,\mathfrak{F},\mathbb{P})$. Any measurable map $T:\Omega\rightarrow \Upsilon$
(i.e. such that $T^{-1}(\mathcal{H})\subset \mathfrak{F} $) is called a {\em random quadratic stochastic operator} (RQSO).
\end{defn}

In \cite{NN} a class of dyadic random quadratic stochastic operators in random environment was investigated.

\section{Main Result}
\label{sec:mr}
Let $\mathcal{V}$ be a countable set of Volterra QSOs on $S^{m-1}$ such that for each $k\in \lsem m \rsem$ there exists a $V \in \mathcal V$ such that
\begin{equation}
\label{eq:squares}
(V \mathbf{x})_k=x^2_k.
\end{equation}
Assume $\mathcal V$ to be indexed by $\mathbb N$ such that \eqref{eq:squares} holds for the corresponding $V_1, \ldots, V_m \in \mathcal V$.
Note that such Volterra QSOs exist -- even extremal ones: in fact \eqref{eq:squares}  holds for $V$ if and only if the associated skew-symmetric matrix
$A$ in Proposition \ref{RN1} satisfies $a_{ki}=-1$ for all $i \neq k$. The skew-symmetry of $A$ also shows that no Volterra QSO $V$ can satisfy
\eqref{eq:squares} for two different values of $k$.

Let $\nu_i, i=1,2,...$ be a probability distribution on $\mathcal V$ such that $\nu_i>0$ for all $i\in \lsem m \rsem$.

\begin{thm}\label{thm:mr}
Consider a sequence  $T_1,T_2, ....$ of independent RQSO in $\mathcal{V}$ such that $\mathbb{P}(T_i=V_j)=\nu_j$ for each $j=1,2,...$ and $i \in \N$.
Then, for any $\mathbf{x}\in S^{m-1}$, we have that
$$
\mathbb{P}\big(\lim\limits_{n\rightarrow \infty}(T_n \circ \ldots \circ T_1)(\mathbf{x}) \in \{\mathbf{e}_1,...,\mathbf{e}_m\} \big)=1.
$$

\end{thm}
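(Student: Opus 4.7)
Set $\mathbf{X}^{(n)} := (T_n \circ \cdots \circ T_1)(\mathbf{x})$ and let $\F_n := \sigma(T_1,\ldots,T_n)$. By Proposition \ref{RN1}, write each $V \in \mathcal V$ as $(V\mathbf{x})_k = x_k\,y_k(V,\mathbf{x})$, where $y_k(V,\mathbf{x}) := 1 + \sum_i a_{ki}(V)\,x_i$, and set $y_k^{(n)} := y_k(T_{n+1},\mathbf{X}^{(n)})$. The argument proceeds in three stages: (i) prove $P_n := \prod_k X_k^{(n)} \to 0$ almost surely; (ii) prove that $\mathbf{X}^{(n)}$ itself converges almost surely; (iii) identify the limit as a vertex.

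For stage (i), the skew-symmetry of $A(V)$ gives $\sum_k y_k^{(n)} = m$ identically, so AM--GM yields $\prod_k y_k^{(n)} \leq 1$ with equality iff every $y_k^{(n)} = 1$; hence $P_n$ is pathwise non-increasing and $P_\infty := \lim_n P_n$ exists. Suppose for contradiction that $\mathbb P(P_\infty > 0) > 0$. On $\{P_\infty > 0\}$ the non-negative increments $-\log(P_{n+1}/P_n)$ are summable and therefore tend to $0$. Since $\nu_k > 0$, each $N_k := \{n : T_{n+1} = V_k\}$ is almost surely infinite; for $n \in N_k$ the increment equals $-\log \prod_i y_i(V_k,\mathbf{X}^{(n)})$, so $\prod_i y_i(V_k,\mathbf{X}^{(n)}) \to 1$ along $N_k$, and the equality case of AM--GM together with compactness forces every $y_i(V_k,\mathbf{X}^{(n)}) \to 1$ along $N_k$. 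Since $y_k(V_k,\mathbf{x}) = x_k$ for the operator $V_k$, this forces $X_k^{(n)} \to 1$ along $N_k$, whence $X_l^{(n)} \to 0$ along $N_k$ for $l \neq k$. Choosing $k \neq l$ yields $\liminf_n X_l^{(n)} = 0$ for every $l$, so $\liminf_n P_n = 0$, contradicting $P_\infty > 0$.

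Stage (ii) is the technical core of the proof and is where the appendix's martingale-type limit theorem enters. The plan is to apply it to the decomposition
\begin{equation*}
\log X_k^{(n)} = \log X_k^{(0)} + \sum_{l=0}^{n-1}\bigl(\log y_k^{(l)} - g_k(\mathbf{X}^{(l)})\bigr) + \sum_{l=0}^{n-1} g_k(\mathbf{X}^{(l)}),
\end{equation*}
where $g_k(\mathbf{x}) := \sum_j \nu_j \log y_k(V_j,\mathbf{x})$, so that the first sum is an $(\F_n)$-martingale and the second a predictable drift. The appendix theorem is designed to yield almost-sure convergence of $\log X_k^{(n)}$ in $[-\infty, 0]$, and hence of $X_k^{(n)}$ in $[0,1]$, for each $k$; this gives $\mathbf{X}^{(n)} \to \mathbf{X}^\infty$ almost surely for some random $\mathbf{X}^\infty \in S^{m-1}$. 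The main obstacle --- and the reason a standard $L^2$-martingale argument does not suffice --- is that $\log y_k^{(l)}$ is unbounded below: on $\{T_{l+1} = V_k\}$ one has $\log y_k^{(l)} = \log X_k^{(l)}$, which may blow up precisely along those sample paths where $X_k^{(l)}$ approaches $0$. Controlling this singularity while still producing convergence is exactly the role of the appendix's martingale-type theorem.

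Given stage (ii), stage (iii) is routine. Along the almost-surely infinite set $N_k$, the identity $\mathbf{X}^{(n+1)} = V_k(\mathbf{X}^{(n)})$ passes to the limit by continuity of $V_k$, giving $V_k(\mathbf{X}^\infty) = \mathbf{X}^\infty$ for every $k \in \lsem m \rsem$. The $k$-th coordinate of this fixed-point equation reads $(X_k^\infty)^2 = X_k^\infty$, so $X_k^\infty \in \{0,1\}$; since this holds for every $k$ and $\sum_k X_k^\infty = 1$, exactly one $X_k^\infty$ equals $1$, i.e., $\mathbf{X}^\infty \in \{\mathbf{e}_1,\ldots,\mathbf{e}_m\}$ almost surely.
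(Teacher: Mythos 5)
Your overall outline (martingale decomposition, drift theorem, fixed-point identification of the limit) has the right flavor, but stage (i) contains a mathematical error and stage (ii) --- the crux --- is missing the key structural piece.

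Stage (i) is false. Skew-symmetry of $A$ gives $\sum_k x_k\,y_k(V,\mathbf{x})=1$ (simplex preservation), \emph{not} $\sum_k y_k(V,\mathbf{x})=m$. Indeed $\sum_k y_k = m + \sum_i x_i\bigl(\sum_k a_{ki}\bigr)$, and the column sums of a skew-symmetric matrix need not vanish. Concretely, take $m=2$, $a_{12}=1$, $\mathbf{x}=(0.1,\,0.9)$: then $y_1=1.9$, $y_2=0.9$, so $y_1y_2=1.71>1$, and $P_0 = 0.09$ while $P_1 = (0.19)(0.81)=0.1539 > P_0$. Hence the coordinate product is not a Lyapunov function for a general Volterra operator, $P_n$ is not pathwise nonincreasing, the AM--GM step has no valid input, and the whole stage collapses. (This stage is logically redundant given stages (ii)--(iii), but as written it is wrong.)

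Stage (ii) is the real issue: Proposition \ref{prop:appendix} does not deliver almost-sure convergence of $\log X_k^{(n)}$. Its two conclusions are (a) linear drift to $\pm\infty$ \emph{conditionally on the process never crossing the level $a$}, and (b) a lower bound, uniform in the starting point above $S$, on the probability of staying above $b$ forever. It says nothing about sample paths that do cross $a$. To pass from these conditional statements to the unconditional claim in Theorem~\ref{thm:mr} one needs two further ingredients that your plan does not supply: (1) a uniform-in-$\mathbf{x}$ hitting estimate showing that from anywhere in $S^{m-1}$ the chain reaches an $\varepsilon$-neighborhood of some vertex in a bounded number of steps with probability bounded below (this is Proposition~\ref{pro1}, which exploits the special operators $V_1,\ldots,V_m$ with $(V_k\mathbf{x})_k=x_k^2$); and (2) a stopping-time ``try-until-success'' argument: define successive attempts $\tau_k$ (hit the neighborhood) and $\sigma_k$ (fall back out), show $\mathbb P(\sigma_k<\infty)\le \theta^k$, and invoke Borel--Cantelli to conclude some attempt succeeds almost surely. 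Also note that the truncation to a process with bounded increments (the paper's $Y^i_n$) is \emph{not} built into the appendix proposition; it is a separate device needed to make the variance hypothesis hold, because $\log y_k(V_j,\mathbf{X}^{(n)})$ is unbounded below. Your stage (iii) observation --- that a limit must be a common fixed point of every $V_k$, hence a vertex --- is correct and slightly slicker than the paper's direct tracking of $Z^i_n\to-\infty$, but it only becomes usable once stage (ii) is actually proved.
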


For $\varepsilon>0$ we denote by $U^\varepsilon_i=\{\mathbf{x}\in S^{m-1}: x_j<\varepsilon, \ \ j\in \lsem m \rsem\setminus \{i\}\}$ the
$\varepsilon$-neighborhood of the vertex $\mathbf{e}_i, \ i\in \lsem m \rsem$ and $U_\varepsilon=\bigcup\limits_{i\in \lsem m \rsem} U^\varepsilon_i$. Further, we define $\Lambda:=\{\mathbf{e}_1,...,\mathbf{e}_m\}$
as the set of vertices of $S^{m-1}$.
The following proposition shows that for given $\varepsilon >0$ one can find some $N$ such that after $N$ iterations the probability of ending up in $U_{\varepsilon}$ is
bounded away from 0 uniformly with respect to the initial condition. Proposition \ref{pro2} then shows that in the case of this event there is a certain chance that the trajectory then converges to the corresponding vertex. To show the main result, we then argue that if this fails (i.e. either the trajectory is not in $U_{\varepsilon}$
after $N$ iterations or it is but the trajectory then leaves the neighborhood rather than converging to the corresponding vertex) we simply try once more. Since the chance
of being successful is bounded away from zero uniformly for all starting points, the result then follows.

\begin{pro}\label{pro1}
Under the assumptions from the Theorem \ref{thm:mr} we know that for each $\varepsilon>0$ there are $N \in\mathbb{N}$ and $q>0$ such that for every point $\mathbf{x}\in S^{m-1}$
$$
\mathbb{P}\Big(T_{N} \circ T_{N-1}\circ ...\circ T_1(\mathbf{x})\in U_\varepsilon\Big)\geq q.
$$
\end{pro}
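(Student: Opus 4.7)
My plan is to reduce the probabilistic claim to a deterministic one: for each $\mathbf{x}\in S^{m-1}$ I will exhibit a sequence of operators from the finite set $\{V_1,\ldots,V_m\}$, of length uniformly bounded by some $N=N(\varepsilon,m)$ independent of $\mathbf{x}$, whose composition sends $\mathbf{x}$ into $U_\varepsilon$. Once such a sequence is identified, independence of the $T_k$ yields
\[
\mathbb{P}\big(T_k=V_{i_k}\text{ for }k=1,\ldots,N\big)\geq\big(\min_{1\leq j\leq m}\nu_j\big)^{N}=:q>0,
\]
which is precisely the bound claimed by the proposition.

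I would construct the sequence by induction on the simplex dimension $m$. The base case $m=1$ is trivial since $S^0=\{\mathbf{e}_1\}\subset U_\varepsilon$. For the inductive step, assume $\mathbf{x}\notin U_\varepsilon$; then no coordinate of $\mathbf{x}$ exceeds $1-\varepsilon$, for otherwise the corresponding $U_j^\varepsilon$ would already contain $\mathbf{x}$. Applying $V_1$ exactly $n_1$ times---with $n_1$ chosen so that $(1-\varepsilon)^{2^{n_1}}\leq\delta$, where $\delta>0$ is a small parameter fixed below---yields a state $\mathbf{y}$ with $y_1\leq\delta$, by virtue of the identity $(V_1\mathbf{z})_1=z_1^2$; the choice of $n_1$ depends only on $\varepsilon$ and $\delta$, not on $\mathbf{x}$.

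Next, the restrictions $V_2|_{F_1},\ldots,V_m|_{F_1}$ to the face $F_1:=\{z\in S^{m-1}:z_1=0\}\cong S^{m-2}$ are Volterra QSOs (their matrices are skew-symmetric submatrices) still satisfying the squaring property, so I may apply the inductive hypothesis on $F_1$ with target radius $\varepsilon/2$ to the rescaled point $\pi(\mathbf{y})$ with $\pi(\mathbf{y})_j:=y_j/(1-y_1)$ for $j\geq 2$. This yields a number $N_{m-1}=N(\varepsilon/2,m-1)$ and a sequence of length $\leq N_{m-1}$ in $\{V_2|_{F_1},\ldots,V_m|_{F_1}\}$ driving $\pi(\mathbf{y})$ into the $\varepsilon/2$-neighborhood of the vertex set $\{\mathbf{e}_2,\ldots,\mathbf{e}_m\}$ of $F_1$. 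I would then apply the corresponding ambient-space operators $V_{i_1},\ldots,V_{i_{N_{m-1}}}$ to $\mathbf{y}$ itself. Two quantitative ingredients close the loop. First, Proposition \ref{volotsen} gives $(V\mathbf{z})_1\leq 2z_1$ for any Volterra $V$, so after $N_{m-1}$ further steps the first coordinate stays bounded by $2^{N_{m-1}}\delta$. Second, the pool of length-$\leq N_{m-1}$ compositions from the finite alphabet $\{V_2,\ldots,V_m\}$ is itself finite and each such composition is uniformly continuous on the compact simplex, so there is a common modulus $\delta_0>0$ such that $y_1\leq\delta_0$ forces the ambient output to agree with the face output to within $\varepsilon/2$ in every coordinate. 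Setting $\delta:=\min\{\delta_0,\varepsilon/2^{N_{m-1}+1}\}$ therefore places the final state in $U_\varepsilon$, and the total length $N:=n_1+N_{m-1}$ is independent of $\mathbf{x}$.

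The principal technical obstacle is this last uniform-continuity step, because the sequence supplied by the inductive hypothesis depends on $\pi(\mathbf{y})$. The saving observation is that there are only finitely many distinct length-$\leq N_{m-1}$ words over the finite alphabet $\{V_2,\ldots,V_m\}$, so one may take $\delta_0$ as the minimum of the corresponding (finitely many) moduli of continuity; the remainder is routine bookkeeping.
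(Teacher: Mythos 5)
Your argument takes a genuinely different route from the paper's. The paper's proof is direct and non-inductive: fix an integer $r=r(\varepsilon,m)$, then apply $m-1$ ``phases'' of exactly $r$ steps each, where phase $k$ applies the squaring operator $V_{j_k}$ for an index $j_k$ chosen \emph{adaptively} to have currently small coordinate (the index maximizing the distance to the corresponding vertex, among indices not yet handled, which is automatically $\leq 1/2$). The doubling bound $(V\mathbf{z})_i\le 2z_i$ then shows the coordinates squashed in earlier phases stay below $\varepsilon$. This gives a word of exactly length $N=r(m-1)$ for \emph{every} $\mathbf{x}$, and the probability bound falls out immediately. Your induction-on-$m$ with the face projection $\pi$ uses the same two engines (the squaring identity and Proposition \ref{volotsen}), and the finiteness-of-words observation to get a uniform modulus of continuity is a correct resolution of the obvious circularity; but the overall route is noticeably more involved.

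There is, however, a genuine gap in the reduction step. You construct a word of length \emph{at most} $N$ that sends $\mathbf{x}$ into $U_\varepsilon$, and in the case $\mathbf{x}\in U_\varepsilon$ the word has length $0$. But the proposition requires a lower bound on $\mathbb{P}\big(T_N\circ\cdots\circ T_1(\mathbf{x})\in U_\varepsilon\big)$ — exactly $N$ applications. Once a word of length $\ell<N$ has landed you in $U^k_\varepsilon$, you have no control over the remaining $N-\ell$ random operators: if one of them is $V_j$ with $j\ne k$, the other coordinates $i\ne j,k$ can double at each step, and the state can leave $U_\varepsilon$. Hence the probability estimate
$\mathbb{P}(T_k=V_{i_k}\text{ for }k=1,\ldots,N)\geq q$
is only meaningful once the word actually has length $N$ — a stronger claim than you proved. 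Note also that the case you dispose of with ``assume $\mathbf{x}\notin U_\varepsilon$'' is precisely the one where the gap bites. To close it you would need to strengthen the inductive hypothesis to produce, for \emph{every} starting point, a word of \emph{exact} length $N_{m-1}$ landing in a \emph{shrunken} neighborhood $U_{\varepsilon'}$ (with $\varepsilon'$ small enough that the doubling bound carries any forced extra steps); or, alternatively, choose the first index adaptively as the one with the smallest coordinate (guaranteed $\le 1/m$), which removes the need to treat $\mathbf{x}\in U_\varepsilon$ separately and is essentially what the paper does to get a word of exact uniform length.
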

\begin{proof}
Let $\varepsilon >0$ and choose $r\in \N$ so large that $-2^r +(m-2)r < \log(\varepsilon)/\log(2)$.

Now fix some starting point $\mathbf{x} \in S^{m-1}$ and define $j_1 \in \lsem m \rsem$ as the index of the vertex corresponding to the maximal distance of $\mathbf{x}$ to $\Lambda$, i.e.
\begin{align*}
 \Vert \mathbf{x}-\mathbf{e}_{j_1}\Vert = \max_{j \in \lsem m \rsem}\Vert \mathbf{x} - \mathbf{e}_j\Vert.
\end{align*}
We first want to find a deterministic sequence  $\bar V_1, \ldots, \bar V_{m-1} \in \mathcal V$ such that $\bar V_{m-1}^r\circ\ldots\circ\bar V_1^r (\mathbf{x}) \in U_{\varepsilon}^{j_0}$ in order to then prove that the probability of this realization is bounded away from 0.
Begin by setting $\bar V_1 :=V_{j_1}$ and define $j_2 \in \lsem m \rsem$ as the index corresponding to
\begin{align*}
 \Vert \bar V_1^r(\mathbf{x}) - \mathbf{e}_{j_2}\Vert = \max_{j \in \lsem m \rsem\setminus \{j_1\}} \Vert \bar V_1^r(\mathbf{x}) - \mathbf{e}_j\Vert
\end{align*}
and $\bar V_2:=V_{j_2}$, then iterate this construction. Define $J_k:=\{j_1,\ldots,j_k\}$ and let $j_{k+1}\in \lsem m \rsem$ be the index corresponding to
\begin{align*}
 \Vert \bar V_k^r\circ\ldots\circ\bar V_1^r(\mathbf{x}) - \mathbf{e}_{j_{k+1}}\Vert = \max_{j \in \lsem m \rsem\setminus J_k} \Vert \bar V_k^r\circ\ldots\circ\bar V_1^r(\mathbf{x}) - \mathbf{e}_j\Vert
\end{align*}
and set $\bar V_{k+1} :=V_{j_{k+1}}$ for $k = 2, \ldots, m-2$. Observe that we have chosen the indices such that $(\bar V_{k-1}^r\circ \ldots\circ\bar V_1^r (\mathbf{x}))_{j_k} \leq 1/2$ for all $k = 2, \ldots, m-1$.

Since \eqref{eq:squares} holds for our $\bar V_1, \ldots, \bar V_{m-1} \in \mathcal V$ we obtain the following estimates for every $k = 1, \ldots, m-1$:
\begin{align*}
 (\bar V_{m-1}^r\circ\ldots\circ\bar V_1^r (\mathbf{x}))_{j_k}	& = (\bar V_{m-1}^r\circ\ldots\circ\bar V_{k+1}^r(\bar V_k^r(\bar V_{k-1}^r\circ \ldots\circ\bar V_1^r (\mathbf{x}))))_{j_k}\\
							& \leq 2^{r(m-1-k)} (\bar V_{k-1}^r\circ \ldots\circ\bar V_1^r (\mathbf{x}))_{j_k}^{(2^r)}\\
							& \leq 2^{r(m-1-k)}\left(\frac{1}{2}\right)^{(2^r)} = 2^{r(m-1-k)-2^r} \leq  2^{r(m-2)-2^r} < \varepsilon
\end{align*}
where we used Proposition \ref{volotsen} in the first inequality. This implies
\begin{align*}
 \bar V_{m-1}^r\circ\ldots\circ\bar V_1^r (\mathbf{x}) \in U_{\varepsilon}^{j_0}.
\end{align*}
Observe that the probability of choosing these operators can be estimated due to the independence assumption by
\begin{align*}
 \mathbb P ( \forall k=1,\ldots,m-1, \forall (k-1)r<s\leq kr:\; T_s = \bar V_k) = \nu_{j_{m-1}}^r\ldots \nu^r_{j_1} \geq \nu_1^r\ldots \nu_m^r
\end{align*}
where the last estimate does not depend on the starting point $\mathbf{x}\in S^{m-1}$ anymore. Therefore $N:=r(m-1)$ and $q:=\nu_1^r\ldots \nu_m^r$ fulfill the claim.
\end{proof}

In order to analyze the convergence consider a sequence $(T_n)_{n \in \mathbb N}$ of random QSOs as in Theorem \ref{thm:mr} and let $X$ denote a random variable taking values in $\text{int}S^{m-1}$ that is independent of the sequence and such that $\mathbb E[\vert\log(X)\vert]<\infty$. Define a filtration $(\mathcal F_n)_{n \in \N_0}$ by $\mathcal F_n := \sigma(X,T_1, \ldots, T_n)$ for $n \in \N_0$. We introduce the abbreviation $\hat T_n:= T_n\circ\ldots\circ T_1$ and use this to define
\begin{align}
\label{eq:Z}
 Z^i_n:=\log((\hat T_nX)_i).
\end{align}
Note that, by Proposition \ref{RN1} $\hat T_nX \in \text{int} S^{m-1}$ for all $n \in \N$ and thus \eqref{eq:Z} is well-defined.
%(Note that, even though this is not indicated in the notation $Z$ depends on the choice of $x \in S^{m-1}$.)

We would like the increments of this process to be (at least) integrable, but since this is not necessarily the case we define a new process $(Y^i_n)_{n \in \N_0}$ in the following way: Choose $d > \max\{\log(m),\max_{i \in \lsem m \rsem}\left\{1/\nu_i\right\}\log(2)\}$ and set
\begin{align}
 Y^i_0	& := Z^i_0 = \log(X^i), \\
 Y^i_{n+1}-Y^i_n & :=\begin{cases}
                Z^i_{n+1} -Z^i_n,	&\text{if } Z^i_{n+1} -Z^i_n \geq -d,\\
                -d,	& \text{otherwise.}
               \end{cases}
\end{align}
Then we know that for all $\omega \in \Omega$: $Z^i_n(\omega) \leq Y^i_n(\omega)$.

\begin{pro}
\label{pro2}
 For $D:=\min_{i\in \lsem m \rsem}\{\nu_id-\log(2)\}>0$ we have for every $j \in \lsem m \rsem$
 \begin{align*}
 \mathbb P\big(\forall i\in\lsem m \rsem\setminus\{j\}:\liminf_{n \rightarrow \infty} -\frac{1}{n}Y^i_n \geq D \,\big|\, \forall i\in\lsem m \rsem\setminus\{j\}:\forall n \in \mathbb N:\, Y^i_n \leq -d\big) = 1.
\end{align*}
Moreover for every $\theta >0$ and every $b \in \mathbb R$ there exists an $s>0$, such that
\begin{align*}
 \mathbb P( \exists j\in\lsem m \rsem\forall i\in\lsem m \rsem\setminus\{j\}\, \forall n \in \mathbb N:\; Y^i_n < b\mid \mathcal F_0) \geq 1-\theta \;\text{ on } \{X \in \bar U_s\} \ \mathbb P\text{-a.s.}
\end{align*}
where $ \bar U_s :=\{\mathbf{x}\in S^{m-1}: \exists j \in \lsem m \rsem \forall i \neq j: x_j\leq\varepsilon\}$
\end{pro}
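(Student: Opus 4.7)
The plan is to turn both claims into statements about a process with an explicit negative conditional drift. The key one-step estimate is the following: on $\{Y^i_{n-1}\leq -d\}$ we automatically have $Z^i_{n-1}\leq Y^i_{n-1}\leq -d$, since the truncation rule gives $Z^i_n\leq Y^i_n$ pointwise for all $n$. If $T_n=V_i$ (probability $\nu_i$), the squaring property $(V_i\mathbf{y})_i=y_i^2$ yields $Z^i_n-Z^i_{n-1}=Z^i_{n-1}\leq -d$, so the truncation caps the $Y$-increment $\xi^i_n:=Y^i_n-Y^i_{n-1}$ at exactly $-d$. If $T_n\neq V_i$, Proposition~\ref{volotsen} gives $\xi^i_n\leq \log 2$ (no truncation happens on the upside). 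Combined,
\begin{align*}
\mathbb E[\xi^i_n\mid \mathcal F_{n-1}]\leq \nu_i(-d)+(1-\nu_i)\log 2 \leq -(\nu_i d-\log 2)\leq -D
\end{align*}
on $\{Y^i_{n-1}\leq -d\}$, and in every case $\xi^i_n\in [-d,\log 2]$.

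For the first statement I would pass to the Doob decomposition $Y^i_n=Y^i_0+M^i_n+A^i_n$, where $M^i_n$ is a martingale with increments bounded by $d+\log 2$ and $A^i_n=\sum_{k=1}^n \mathbb E[\xi^i_k\mid\mathcal F_{k-1}]$. The classical SLLN for $L^2$-bounded martingale differences (or the martingale-type limit theorem proved in the appendix) gives $M^i_n/n\to 0$ almost surely. On the conditioning event the drift bound above applies at every step, so $A^i_n\leq -Dn$, whence $\limsup_n Y^i_n/n\leq -D$ almost surely on that event, equivalently $\liminf_n -Y^i_n/n\geq D$.

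For the second statement, fix $b\in\mathbb R$ and $\theta>0$; by replacing $b$ with $\min\{b,-d\}$ we may assume $b\leq -d$. Because $-\nu_i d+\log 2<0$ for each $i$, the function $\lambda\mapsto \nu_i e^{-\lambda d}+(1-\nu_i)\, 2^{\lambda}$ equals $1$ at $\lambda=0$ with strictly negative derivative, so we can pick $\lambda>0$ small enough that this quantity is $\leq 1$ for every $i$. Given $\mathbf{x}\in\bar U_s$ choose $j$ with $x_i\leq s$ for all $i\neq j$, fix such an $i$, and let $\sigma:=\inf\{n:Y^i_n\geq b\}$. On $\{\sigma\geq n\}$ we have $Y^i_{n-1}<b\leq -d$, and the same case analysis yields $\mathbb E[e^{\lambda\xi^i_n}\mid\mathcal F_{n-1}]\leq 1$; hence $N^i_n:=e^{\lambda Y^i_{n\wedge\sigma}}$ is a nonnegative supermartingale. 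Because $\{\sigma<\infty\}\subset\{\sup_n N^i_n\geq e^{\lambda b}\}$, Doob's maximal inequality conditional on $\mathcal F_0$ yields
\begin{align*}
\mathbb P(\sigma<\infty\mid\mathcal F_0)\leq e^{-\lambda b}\, N^i_0 = e^{\lambda(\log X_i-b)}\leq (s e^{-b})^{\lambda}.
\end{align*}
A union bound over the $m-1$ indices $i\neq j$ together with a sufficiently small choice of $s$ (namely so that $(m-1)(s e^{-b})^\lambda\leq \theta$) gives the claim.

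The principal obstacle I anticipate is that the drift estimate only holds on $\{Y^i_{n-1}\leq -d\}$, not everywhere, so one cannot directly apply an SLLN to the unrestricted process. Part~(a) circumvents this by conditioning on an event that forces us into this region for all $n$; part~(b) must instead convert the drift bound into an exponential-moment bound on a process stopped at first exit above $b$. Calibrating $\lambda$ so that the moment generating function stays $\leq 1$ throughout the relevant region is exactly why the assumption $d>(\log 2)/\nu_i$ is made in the definition of $Y^i_n$, and reducing to $b\leq -d$ is essential to preserve the supermartingale property of the stopped process.
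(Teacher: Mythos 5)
Your proof is correct, and the two parts relate differently to the paper's argument. For the first claim you redo, in essence, what the paper's Proposition~\ref{prop:appendix} does: a Doob decomposition plus a martingale strong law gives $M^i_n/n\to 0$, while the negative drift bound $\mathbb E[\xi^i_n\mid\mathcal F_{n-1}]\le -D$ on $\{Y^i_{n-1}\le -d\}$ controls the predictable part on the conditioning event. This is the same route, just inlined rather than quoted from the appendix.

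For the second claim your argument is genuinely different. The paper proves the corresponding appendix statement by a Rajchman-type second-moment argument: it splits the trajectory along the subsequence $n^2$, applies Chebyshev to $S_{n^2}$ and to the in-between increments $S_m-S_{n^2}$, and sums, obtaining a tail that decays only polynomially in $Y^i_0$. You instead exploit that the increments $\xi^i_n$ are \emph{bounded} in $[-d,\log 2]$ (a fact the general appendix proposition does not assume): choosing $\lambda>0$ small enough that $\nu_i e^{-\lambda d}+(1-\nu_i)2^{\lambda}\le 1$ for all $i\in\lsem m\rsem$, you make $e^{\lambda Y^i_{n\wedge\sigma_i}}$ a nonnegative supermartingale and apply Doob's maximal inequality. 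This is shorter, conceptually cleaner, and yields a bound $(se^{-b})^{\lambda}$ that decays exponentially in $|\log s|$, which is strictly stronger than what the paper's subsequence argument gives. The trade-off is generality: the paper's appendix lemma only needs a bounded conditional second moment, whereas your exponential tilting needs exponential moments of the increments. In the present application the increments are bounded by construction, so your approach applies and is arguably preferable here; the paper's formulation is the one that extends to heavier-tailed increments. Two small presentational points: you should index the stopping times as $\sigma_i$, one per coordinate $i\ne j$, before taking the union bound; and it is worth noting explicitly that the index $j=j(X)$ is $\mathcal F_0$-measurable so that conditioning on $\mathcal F_0$ freezes it, which is what makes the final ``$\exists j$'' statement follow from the per-$j$ estimate.
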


\begin{proof}
 Note that the increments of $(Y^i_n)_{n \in \N_0}$ are integrable.
Thus we can calculate
\begin{align*}
 \mathbb E[Y^i_{n+1}-Y^i_n\mid \mathcal F_n] & = \mathbb E[\log\left(\frac{T_{n+1}(\hat T_n(X))_i}{\hat T_n(X)_i}\right)\lor (-d)\mid \mathcal F_n] \\
					& = \nu_i\left(\log\left(\hat T_n(X)_i\right)\lor (-d)\right) + \sum_{j\neq i}\nu_j \log\underbrace{\left(\frac{V_j(\hat T_n(X))_i}{\hat T_n(X)_i}\right)}_{\leq 2 \text{ by Prop. \ref{volotsen}}} \\
					& \leq \nu_i\left(Z^i_n \lor (-d)\right) +\log(2)\\
					  & \leq - \nu_id+\log(2) \leq - D \text{ on } \{Z^i_n \leq -d\} \text{ and thus also on } \{Y^i_n \leq -d\}
\end{align*}
and
\begin{align*}
 \mathbb E[(Y^i_{n+1}-\mathbb E[Y^i_{n+1}\mid\mathcal F_n])^2\mid \mathcal F_n]	& = \mathbb E[(Y^i_{n+1} - Y^i_n-\mathbb E[Y^i_{n+1}-Y^i_n\mid\mathcal F_n])^2\mid \mathcal F_n] \\
										 & = \mathbb E[(Y^i_{n+1} - Y^i_n)^2\mid \mathcal F_n] -\mathbb E[(\mathbb E[Y^i_{n+1}-Y^i_n\mid\mathcal F_n])^2\mid \mathcal F_n]\\
										 & \leq \mathbb E[(\underbrace{(Y^i_{n+1} - Y^i_n)^+}_{\leq \log(2)})^2\mid \mathcal F_n] + \mathbb E[(\underbrace{(Y^i_{n+1} - Y^i_n)^-}_{\leq d})^2\mid \mathcal F_n]\\
									      & \leq (\log(2))^2 +d^2 \qquad \mathbb P\text{-a.s.}
\end{align*}
This allows us to apply Proposition \ref{prop:appendix} yielding
\begin{align}
\label{eq:limY_i}
 \mathbb P(\liminf_{n \rightarrow \infty} -\frac{1}{n}Y^i_n \geq D \mid \forall n \in \mathbb N:\; Y^i_n \leq -d) = 1
\end{align}
and that for every $\theta >0$ and every $b \in \mathbb R$ there exists an $r_i \in \R$, such that
\begin{align}
\label{eq:Y_i}
 \mathbb P(\forall n \in \mathbb N: Y^i_n < b\mid \mathcal F_0) \geq 1-\frac{1}{m-1}\theta \; \text{ on } \{Y^i_0 \leq r_i\}=\{\log(X^i) \leq r_i\}.
\end{align}
From \eqref{eq:limY_i} we obtain for every $j \in \lsem m \rsem$
\begin{align*}
 \mathbb P(\forall i\in\lsem m \rsem\setminus\{j\}:\,\liminf_{n \rightarrow \infty} -\frac{1}{n}Y^i_n \geq D \mid \forall i\in\lsem m \rsem\setminus\{j\}\,\forall n \in \mathbb N:\; Y^i_n \leq -d) = 1.
\end{align*}
% Since we chose $d$ large enough for the sets $\{\forall i\in\lsem m \rsem\setminus\{j\}:\,\forall n \in \mathbb N:\; Y^i_n \leq -d\}_{j \in \lsem m\rsem}$ to be disjoint we finally obtain
% \begin{align*}
%  \mathbb P(\exists j \in \lsem m \rsem:\,\forall i\in\lsem m \rsem\setminus\{j\}:\,\liminf_{n \rightarrow \infty} -\frac{1}{n}Y^i_n \geq D \mid \exists j \in \lsem m \rsem:\,\forall i\in\lsem m \rsem\setminus\{j\}:\,\forall n \in \mathbb N:\; Y^i_n \leq -d) = 1.
% \end{align*}
With $s:=\min_{i=1, \ldots, n}\{\exp(r_i)\}$ for any $j \in \lsem m \rsem$ \eqref{eq:Y_i} implies
\begin{align*}
 \mathbb P(\forall i\in\lsem m \rsem\setminus\{j\}\,\forall n \in \mathbb N: Y^i_n < b\mid \mathcal F_0) \geq 1-\theta \text{ on } \{X \in \bar U^j_s\}=\bigcap_{i\in\lsem m \rsem\setminus\{j\}} \{X^i \leq s\}
\end{align*}
and thus
\begin{align*}
 \mathbb P(\exists j \in \lsem m \rsem\,\forall i\in\lsem m \rsem\setminus\{j\}:\,\forall n \in \mathbb N: Y^i_n < b\mid \mathcal F_0) \geq 1-\theta \text{ on } \{X \in \bar U_s\}.
\end{align*}
\end{proof}

\begin{proof}[Proof of Theorem \ref{thm:mr}]

Recall the definitions of $D$ and $d$ from above. Note that by Proposition \ref{RN1} for any $k \in \lsem m \rsem$ and every Volterra operator $V$ $x_k \neq 0$ if, and only if $(V \mathbf{x})_k \neq 0$. Thus, by disregarding the zero-entries,  starting on $\partial S^{m-1}$ can be interpreted as starting and considering the same problem on the interior of a lower-dimensional simplex. Therefore, w.l.o.g. we can assume $\mathbf{x} \in \text{int} S^{m-1}$. Let $\theta \in (0,1)$ be arbitrary and setting $b:=-d$ choose $s$ as in Proposition \ref{pro2}. For $\varepsilon:=\min\{s, \frac{1}{m}\}$ let $N$ and $q$ be as in Proposition \ref{pro1}.

 We begin by defining the objects we will need for the proof. Define the stopping time
 \begin{align*}
\tau_1	&:= \inf\{nN\mid \exists j \in \lsem m \rsem\,\forall i\in\lsem m \rsem\setminus\{j\}: Z^i_{nN} < \log(\varepsilon)\} = \inf\{nN\mid \hat T_{nN}(x) \in U_{\varepsilon}\}.
 \end{align*}
Note that Proposition \ref{pro1} shows that $\tau_1$ is almost surely finite.
Set $J_1:=\min\{j \in \lsem m \rsem\mid \hat T_{\tau_1}(x) \in U_{\varepsilon}^j\}$.
Now for every index $i \neq J_1$ we start the cut-off version $(Y^{\tau_1}_n)_{n \in \N_0}$ of our process given by
\begin{align*}
Y^{\tau_1,i}_0	& := \log(\varepsilon) \geq Z^i_{\tau_1}, \\
 Y^{\tau_1,i}_{n+1}-Y^{\tau_1,i}_n & :=\begin{cases}
                Z^i_{\tau_1+n+1} -Z^i_{\tau_1+n},	&\text{if } Z^i_{\tau_1+n+1} -Z^i_{\tau_1+n} \geq -d\\
                -d,	& \text{otherwise}
               \end{cases}	
\end{align*}
for all $n \in \N_0$ and use this to define the stopping time
\begin{align*}
 \sigma_1 	:= \inf\{n > \tau_1 \mid \exists i \neq J_1: Y^{\tau_1,i}_n \geq -d\}.
\end{align*}
$J_1$ and $\sigma_1$ are well-defined since $\tau_1 < \infty$ $\mathbb P$-a.s.
Recursively then define
\begin{align*}
 \tau_{k+1} & := \inf\{nN>\sigma_k\mid \exists j \in \lsem m \rsem:\forall i\in\lsem m \rsem\setminus\{j\}: Z^i_{nN} < \log(\varepsilon)\} \\
	    & = \inf\{nN>\sigma_k\mid \hat T_{nN}(x) \in U_{\varepsilon}\} \\
 J_{k+1}	& := \min\{j \in \lsem m \rsem \mid \hat T_{\tau_{k+1}}(x)_{\tau_{k+1}} \in U_{\varepsilon}^j\} \\
	    Y^{\tau_{k+1},i}_0	& := \log(s) \geq Z^i_{\tau_{k+1}} \\
 Y^{\tau_{k+1},i}_{n+1}-Y^{\tau_{k+1},i}_n  &:=\begin{cases}
                Z^i_{\tau_{k+1}+n+1} -Z^i_{\tau_{k+1}+n},	&\text{if } Z^i_{\tau_{k+1}+n+1} -Z^i_{\tau_{k+1}+n} \geq -d\\
                 -d,	& \text{otherwise,}
               \end{cases}	\\
  \sigma_{k+1} 	&:=\inf\{n > \tau_{k+1} \mid \exists i \neq J_{k+1}: Y^{\tau_{k+1},i}_n \geq -d\}
\end{align*}
for $i \neq J_{k+1}$, $n \in \N_0$.

Note that, on $\{\sigma_k=\infty\}$ we have the existence of a $j\in\lsem m \rsem$ such that for all other $i\in\lsem m \rsem\setminus\{j\}:\,Y^{\tau_k,i}_n < - d$ holds, which by Proposition \ref{pro2} and its definition implies that $\lim_{n \rightarrow \infty} Z^i_n =-\infty$. This is, however, equivalent to $\lim_{n \rightarrow \infty} \hat T(x) \in \Lambda$, our desired result.

Of course, since some of the above are only well-defined, when the corresponding stopping times are finite, we begin by considering the probabilities of these events.
Again, by Proposition \ref{pro1} we know that $\mathbb P(\tau_{k+1} < \infty \mid \mathcal F_{\sigma_k}) = 1$ on $\{\sigma_k < \infty\}$.
Furthermore, since $\{\tau_k<\infty\} \subset\{\hat T_{\tau_k}(x) \in U_{\varepsilon}\} $ we know that
\begin{align*}
 \mathbb P(\sigma_k = \infty \mid \mathcal F_{\tau_k})	& = \mathbb P(\exists j\in\lsem m \rsem:\forall i\in\lsem m \rsem\setminus\{j\}: \forall n\in \N_0: Y^{\tau_k, i}_n < -d\mid \mathcal F_{\tau_k}) \\
							& \geq 1-\theta
\end{align*}
on $\{\tau_k < \infty\}$ $\mathbb P$-a.s. by Proposition \ref{pro2} and therefore $\mathbb P(\sigma_k < \infty \mid \mathcal F_{\tau_k}) \leq \theta$ on $\{\tau_k < \infty\}$.
Combining the results above we see that for every $k\in \N$ we have $ \mathbb P(\sigma_k <\infty \mid \mathcal F_{\sigma_{k-1}}) \leq \theta \text{ on } \{\sigma_{k-1}<\infty\}$
which we can use to conclude
\begin{align*}
 \mathbb P(\sigma_k<\infty)	& = \mathbb P(\sigma_k<\infty, \ldots, \sigma_1<\infty) \\
			& = \mathbb E(\underbrace{\mathbb P(\sigma_k<\infty\mid\mathcal F_{\sigma_{k-1}})}_{\leq \theta}\mathbbm 1_{\{\sigma_{k-1}<\infty\}}  \cdots\mathbbm 1_{\{\sigma_1<\infty\}} )\\
			& \leq \theta \mathbb P(\sigma_{k-1}<\infty, \ldots, \sigma_1<\infty) \leq \theta^k
\end{align*}
iterating the argument used in the last step. Therefore $ \sum_{k\in \N} \mathbb P(\sigma_k<\infty) < \infty$
which by the first Borel-Cantelli Lemma implies that $\mathbb P(\exists k\in \N:\, \sigma_k=\infty)=1$.
Since we chose the $\{\forall i\in\lsem m \rsem\setminus\{j\}:\,\forall n \in \mathbb N:\; Y^i_n < -d\}_{j \in \lsem m\rsem}$ to be disjoint by Proposition \ref{pro2} we know that
 \begin{align*}
 \mathbb P(\exists j \in \lsem m \rsem\,\forall i\in\lsem m \rsem\setminus\{j\}:\,\lim_{n \rightarrow \infty} Y^{\tau_k,i}_n = -\infty \mid \sigma_k = \infty) = 1
 \end{align*}
and can conclude
\begin{align*}
 1	& = \mathbb P(\exists k \in \N\,\exists j \in \lsem m \rsem\,\forall i\in\lsem m \rsem\setminus\{j\}:\, \lim_{n\rightarrow\infty}Y^{\tau_k, i}_n = -\infty) \\
	& \leq \mathbb P(\exists j \in \lsem m \rsem\,\forall i\in\lsem m \rsem\setminus\{j\}:\, \lim_{n \rightarrow \infty}Z^i_n = -\infty)\\
	& \leq \mathbb P(\lim_{n \rightarrow \infty} \hat T_n(x) \in \Lambda)
\end{align*}
which completes the proof of  Theorem \ref{thm:mr}.
\end{proof}

\section{Random Attractors}\label{sec:att}
In this section, we recall the concept of a random attractor of a random dynamical system (RDS) and show that the RDS generated by
the sequence of random operators in Theorem \ref{thm:mr} has the set $\Lambda=\{\mathbf{e}_1,...,\mathbf{e}_m\}$ as a minimal random point attractor.
There exist a number of different concepts of random attractors some of
which we will introduce below. We restrict our attention to the discrete time setting.

Let $(E,d)$ be a separable, complete metric space and denote its Borel-$\sigma$-field by $\E$. The following definition can be found in \cite{Ar98}.

\begin{defn}

\begin{itemize}
\item[a)]
  $\bigl(\Omega,\F,\PP,(\vartheta_n)_{n\in\mathbb Z}\bigr)$
  is called a \emph{metric dynamical system (MDS)}, if
  $(\Omega,\F,\PP)$ is a probability space, and the family of
  maps
  $\bigl\{\vartheta_n:\Omega\to\Omega,n\in\mathbb Z\bigr\}$
  satisfies
  \begin{enumerate}
  \item[(i)] the mapping $\omega\mapsto\vartheta_n(\omega)$
    is measurable for each $n \in \N_0$,
  \item[(ii)] $\vartheta_{m+n}=\vartheta_m\circ\vartheta_n$ for
    every $m,n\in\mathbb Z$, and $\vartheta_0=\mbox{Id}_\Omega$,
    and
  \item[(iii)] for each $n\in\N_0$, $\vartheta_n$ preserves the
    measure~$\PP$.
\end{enumerate}
\item[b)] A \emph{random dynamical system (RDS)} on the measurable space
  $(E,\E)$ over the MDS
  $\bigl(\Omega,\F,P,(\vartheta_n)\bigr)$ with time~$\N_0$
  is a mapping
  \begin{displaymath}
    \varphi:\N_0\times E\times\Omega \to E, \qquad
    (n,\mathbf{x},\omega)\mapsto \varphi(n,\mathbf{x},\omega)
  \end{displaymath}
  with the following properties:
  \begin{enumerate}
  \item[(i)] For each $n \in \N_0$, $\varphi(n,.,.)$ is
    $\left(
      \E\otimes\F,\E\right)$-measurable.
  \item[(ii)] For all $m,n\in \N_0)$
    \begin{displaymath}
      \varphi(m+n,\omega)
      =\varphi(m,\vartheta_n\omega)\circ\varphi(n,\omega)\qquad
        \mbox{for all}\ \omega\in\Omega,
   \end{displaymath}
   and $\varphi(0,\omega)=\mbox{Id}_E$ for all $\omega\in\Omega$.
 \end{enumerate}
The RDS~$\varphi$ is called \emph{continuous} if, in addition,
 \begin{enumerate}[resume]
 \item[(iii)]the mapping $\mathbf{x}\mapsto\varphi(n,\mathbf{x},\omega)$ is continuous
   for all $(n,\omega)\in \N_0\times\Omega$.
 \end{enumerate}
\end{itemize}
\end{defn}

The following definition of a global attractor is (essentially) due to Crauel and
Flandoli~\cite{CF94} while point attractors were introduced in \cite{Crauel01}.

\begin{defn}  \label{f_ibf_watr}
Let~$\varphi$ be an RDS on~$E$ over the MDS
$\bigl(\Omega,\F,\PP,(\vartheta_n)_{n\in\N_0}\bigr)$.
Let $\mathcal B\subset2^E$ be an arbitrary subset of the power set of~$E$.
A family of sets $A(\omega) \in 2^E, \omega \in \Omega$ is called a
\emph{$\mathcal B$-attractor} for~$\varphi$ if
\begin{itemize}
\item $A$ is a compact random set (i.e.\ $A(\omega)$ is nonempty and compact for every $\omega \in \Omega$ and
$\omega\mapsto d\bigl(\mathbf{x},A(\omega)\bigr)$ is measurable for every
$\mathbf{x}\in E$).
\item $A$ is \emph{strictly $\varphi$-invariant}, that is, there exists a set $\Omega_0$ of full measure, such
  that
  $\varphi(n,\omega)(A(\omega))=A(\vartheta_n\omega)$ for all $n \in \N_0$,
  $\omega\in\Omega_0$.
\item $\displaystyle\lim_{n\to\infty}\sup_{\mathbf{x}\in B}
  d\bigl(\varphi(n,\vartheta_{-n}\omega)(\mathbf{x}),A(\omega)\bigr)=0$
  almost surely for every $B\in\mathcal B$.
\end{itemize}
In particular, a $\mathcal B$-attractor is called
\begin{itemize}
\item \emph{global attractor} in case that~$\mathcal B$ is the
  set of all compact subsets of~$E$,
\item \emph{point attractor} in case that~$\mathcal B$ is the
  set of all singletons $\{ \{\mathbf{x}\},\, \mathbf{x} \in E\}$ (or -- equivalently -- the set of all finite subsets of $E$).
\end{itemize}
\end{defn}

A random attractor as introduced in the previous definition is often called {\em strong attractor} or
{\em pullback attractor} as opposed to a {\em weak attractor} for which the almost sure convergence is relaxed to
convergence in probability. One can argue that weak attractors occur more naturally than strong ones (see e.g. \cite{ChuScheu04}) (but proving the existence of a strong attractor is of course a stronger statement).
Sometimes the word {\em compact} is replaced by {\em bounded} in the definition of
a global attractor.
While a global attractor, if it exists, is always unique (up to sets of measure zero, see \cite{Crauel99}) this
is not true for a point attractor (Theorem \ref{attr} below provides an example). We call a point attractor $A(\omega)$ {\em minimal}
if for every other point attractor $\tilde A(\omega)$, we have $A(\omega) \subseteq \tilde A(\omega)$ for almost all $\omega \in \Omega$.
Under mild assumptions, existence of a point attractor implies the existence of a minimal point attractor
(see \cite{Crauel01}, Remark 3.5 (iii)).
Clearly, each global attractor is also a point attractor but the converse is not necessarily true (again, Theorem \ref{attr} below provides an example). Note that a comparison between different concepts of a random attractor has been performed in a special case in \cite{Scheu02} and
criteria for strong and weak random attractors have been established in \cite{CDS09}.\\

We are now ready to apply the concepts to the system introduced in the previous sections. We assume that
all assumptions in Theorem \ref{thm:mr} hold.
As the basic probability space $(\Omega,\F,\PP)$ we take $(\Omega,\F,\PP):=(\V,\nu)^{\Z}$ (where $\V$ is equipped with
the $\sigma$-field of all subsets of $\V$). Further, we define $\big(\vartheta_n(\omega)\big)_m=\omega_{m+n}$, $m,n \in \Z$. Then,
$\bigr(\Omega,\F,\PP,(\vartheta_n)_{n\in\N_0}\bigl)$ is a metric dynamical system and
$\varphi(n,\omega,\mathbf{x}):=\omega_n\circ...\circ \omega_1(\mathbf{x}),\;n \in \N_0,\;\mathbf{x} \in S^{m-1}$
defines an $S^{m-1}$-valued continuous RDS. Since  $S^{m-1}$ is compact and all $V \in \V$ are homeomorphisms, it follows that
$A(\omega):=S^{m-1}$ is the random attractor of $\varphi$. It turns out that $A$ is however not the minimal point attractor.

\begin{thm}\label{attr}
In the set-up above, the set  $\Lambda=\{\mathbf{e}_1,...,\mathbf{e}_m\}$ is  the minimal point attractor of the RDS $\varphi$.
\end{thm}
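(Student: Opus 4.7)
The plan is to verify the three defining properties of a point attractor for $\Lambda$ — that it is a compact random set, strictly $\varphi$-invariant, and pullback-attracts singletons — and then to deduce minimality separately. The compactness/measurability, invariance and minimality parts are short; the substance of the proof is in the pullback attraction.

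For strict invariance and compactness, I would first record that each Volterra operator fixes every vertex: from \eqref{koefvolt} (equivalently from Proposition \ref{RN1}, using $a_{ii}=0$) one has $V(\mathbf{e}_i)=\mathbf{e}_i$, so $\varphi(n,\omega)(\Lambda)=\Lambda$ identically, and $\Lambda$ is evidently a compact deterministic (hence trivially measurable) random set. Minimality follows from the very same observation: if $\tilde A$ is any point attractor, applying its pullback-attraction property to the singleton $\{\mathbf{e}_i\}$ and using $\varphi(n,\vartheta_{-n}\omega)(\mathbf{e}_i)=\mathbf{e}_i$ for every $n,\omega$ yields $d(\mathbf{e}_i,\tilde A(\omega))=0$ almost surely; compactness of $\tilde A(\omega)$ then forces $\mathbf{e}_i\in\tilde A(\omega)$ almost surely, and intersecting over the $m$ vertices gives $\Lambda\subseteq\tilde A(\omega)$ almost surely.

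The bulk of the work lies in the pullback attraction. Writing
\[
\Phi_n(\omega,\mathbf{x}):=\varphi(n,\vartheta_{-n}\omega)(\mathbf{x})=\omega_0\circ\omega_{-1}\circ\ldots\circ\omega_{-n+1}(\mathbf{x}),
\]
I need to show $d(\Phi_n(\mathbf{x}),\Lambda)\to 0$ $\mathbb{P}$-almost surely for each fixed $\mathbf{x}\in S^{m-1}$. A first observation is that for each fixed $n$ the reindexing $\tilde T_j:=\omega_{-n+j}$, $j=1,\ldots,n$, gives the distributional identity $\Phi_n(\mathbf{x})\stackrel{d}{=}\hat T_n(\mathbf{x})$, where $\hat T_n=T_n\circ\ldots\circ T_1$ is the forward composition of Theorem \ref{thm:mr}. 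By that theorem $\hat T_n(\mathbf{x})\to\Lambda$ almost surely, so $d(\Phi_n(\mathbf{x}),\Lambda)\to 0$ at least in probability. Upgrading this to almost sure convergence is the main obstacle I foresee: unlike the forward sequence, the pullback $(\Phi_n(\mathbf{x}))_n$ is not Markovian in $n$, since each successive iteration prepends a fresh operator on the right, effectively restarting the composition from a new initial point, and the attempt/success Borel--Cantelli scheme used in the proof of Theorem \ref{thm:mr} therefore cannot be transported verbatim to the variable $n$.

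My proposed fix is to extract a quantitative rate from the proof of Theorem \ref{thm:mr}: the bound $\mathbb{P}(\sigma_k<\infty)\leq\theta^k$ gives a geometric decay in the number of failed attempts, while Proposition \ref{pro2} provides an exponential contraction rate $D$ on the surviving event; together with the martingale-type estimate in the appendix this should produce a bound
\[
\mathbb{P}\bigl(d(\hat T_n(\mathbf{x}),\Lambda)>\delta\bigr)\leq C(\delta)\,e^{-c(\delta)n}
\]
for every $\delta>0$ and some $c(\delta)>0$. By the distributional identity, the same tail bound transfers to $\Phi_n(\mathbf{x})$, so the tails are summable in $n$ and the first Borel--Cantelli lemma (which does not require independence of the events involved) yields $d(\Phi_n(\mathbf{x}),\Lambda)\leq\delta$ for all large $n$ almost surely; letting $\delta$ range over a countable sequence tending to zero then completes the pullback attraction. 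If the summable rate proves delicate to extract, a fallback strategy is to decompose the past into i.i.d.\ blocks of length $N$ from Proposition \ref{pro1} and rerun the attempt/survival argument of Theorem \ref{thm:mr} intrinsically on the pullback, at the cost of more elaborate bookkeeping but no new conceptual input.
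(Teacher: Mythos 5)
Your proposal follows essentially the same route as the paper: both observe that measurability, invariance, and minimality are immediate from the fact that every $V\in\mathcal V$ fixes each vertex, and both deduce pullback attraction by (i) noting that $\varphi(n,\vartheta_{-n}\omega)(\mathbf{x})$ and $T_n\circ\ldots\circ T_1(\mathbf{x})$ are equal in law so Theorem \ref{thm:mr} already gives convergence in probability, and then (ii) upgrading to almost sure convergence via an exponential tail bound on $\mathbb{P}\bigl(T_n\circ\ldots\circ T_1(\mathbf{x})\notin U_\varepsilon\bigr)$ extracted from Propositions \ref{pro1} and \ref{pro2}, combined with the first Borel--Cantelli lemma. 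The paper states this last step more tersely (``Propositions \ref{pro1} and \ref{pro2} together show that the summands converge to zero exponentially quickly''), while you spell out both the distributional-identity reduction and the fallback block-decomposition more explicitly, but there is no substantive difference in the argument.
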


\begin{proof}
The measurability and invariance properties of a point attractor clearly hold for $\Lambda$. Further, each point attractor has to contain
$\Lambda$ since each point in $\Lambda$ is invariant under every $V \in \V$.  Therefore, it only remains  to show that for each $\mathbf{x} \in S^{m-1}$, we have
$$
\lim_{n\to\infty}
  d\bigl(\varphi(n,\vartheta_{-n}\omega)(\mathbf{x}),\Lambda\bigr)=0,\qquad \PP\mbox{-a.s.}
$$
If we replace ``$\PP$-a.s.'' by ``in probability'', then the result follows immediately from Theorem \ref{thm:mr}. In order to infer
almost sure convergence from convergence in probability, it suffices to show that convergence in probability happens sufficiently quickly.
In fact, thanks to the first Borel-Cantelli Lemma, it suffices to prove that for each $\varepsilon>0$, we have
$$
\sum_{n=1}^{\infty} \PP\big( T_n \circ ... \circ T_1(\mathbf{x}) \in U_{\varepsilon}\big) <\infty.
$$
Observe that Propositions \ref{pro1} and \ref{pro2} together show that the summands converge to zero exponentially quickly and therefore
the assertion follows.
\end{proof}

\section{Appendix}

\begin{pro}
\label{prop:appendix}
 Consider a real-valued process $(Y_n)_{n \in \mathbb N_0}$ that is in $\mathcal L^1(\mathbb P)$ and adapted to a filtration $(\mathcal F_n)_{n \in \mathbb N_0}$ such that for some $a \in \mathbb R$ and $A,B >0$ we have that for all $n \in \mathbb N_0: $
 \begin{enumerate}
  \item $\mathbb E[Y_{n+1}\mid\mathcal F_n] \geq Y_n + A$ and
  \item $\mathbb E[(Y_{n+1} -\mathbb E[Y_{n+1} \mid \mathcal F_n])^2\mid \mathcal F_n] \leq B$
 \end{enumerate}
  on $\{Y_n \geq a\}$ $\mathbb P$-a.s.
Then
\begin{align}
\label{eq:appendix1}
 \mathbb P(\liminf_{n \rightarrow \infty} \frac{1}{n} Y_n \geq A \mid \forall n \in \mathbb N:\; Y_n \geq a) = 1.
\end{align}
Moreover for every $\theta >0$ and every $b \in \mathbb R$ there exists an $S \in \R$, such that
\begin{align}
\label{eq:appendix2}
 \mathbb P(\forall n \in \mathbb N: Y_n > b\mid \mathcal F_0) \geq 1-\theta
\end{align}
$\mathbb P$-a.s. on $\{Y_0 \geq S\}$.
\end{pro}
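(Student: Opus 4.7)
The plan is to derive both statements from a single Doob decomposition applied to the process stopped before it leaves the region in which hypotheses (1)--(2) are available; the positive conditional drift together with the bounded conditional variance then reduces everything to standard martingale estimates.

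Concretely, I would set $\tau := \inf\{n : Y_n < a\}$, $\tilde Y_n := Y_{n \wedge \tau}$, and write the Doob decomposition $\tilde Y_n = Y_0 + C_n + N_n$ with $C$ predictable, $C_0 = N_0 = 0$, and $N$ a martingale. Since the increments of $\tilde Y$ vanish on $\{\tau < k\}$ and satisfy (1)--(2) on $\{\tau \geq k\} \subseteq \{Y_{k-1} \geq a\}$, one obtains $C_n \geq A\,(n \wedge \tau)$ and $\mathbb E[(N_k - N_{k-1})^2 \mid \mathcal F_{k-1}] \leq B$. The latter implies $\sum_k k^{-2}\mathbb E[(N_k - N_{k-1})^2\mid\mathcal F_{k-1}] < \infty$, so Chow's martingale strong law (equivalently, Kolmogorov's inequality combined with Kronecker's lemma) yields $N_n/n \to 0$ almost surely. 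On $\{\tau = \infty\}$ we have $\tilde Y_n = Y_n$ and $C_n \geq An$, so dividing $\tilde Y_n = Y_0 + C_n + N_n$ by $n$ and passing to the limit gives $\liminf_n Y_n/n \geq A$ almost surely on $\{\tau = \infty\} = \{\forall n: Y_n \geq a\}$, which is \eqref{eq:appendix1}.

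For \eqref{eq:appendix2}, given $b \in \mathbb R$, I would run the identical construction with the cut-off level $c := \max(a, b+1)$, i.e.\ $\rho := \inf\{n : Y_n < c\}$. Because $c \geq a$, hypotheses (1)--(2) still apply at time $k-1$ whenever $\rho \geq k$, and the same decomposition now gives $C_n \geq A(n \wedge \rho)$ with the same martingale-variance bound. On $\{Y_0 \geq S\}$ with $S \geq c$ we have $\rho \geq 1$, and on $\{\rho < \infty\}$ the relation $\tilde Y_\rho = Y_\rho < c$ forces $N_\rho + A\rho < c - Y_0 \leq c - S$. Since $\{\rho = \infty\} \subseteq \{\forall n: Y_n \geq c > b\}$,
\[
\mathbb P(\exists n : Y_n \leq b \mid \mathcal F_0) \;\leq\; \mathbb P(\rho < \infty \mid \mathcal F_0) \;\leq\; \mathbb P\!\left(\inf_{n\geq 1}(N_n + An) < -(S - c) \,\big|\, \mathcal F_0\right)
\]
$\mathbb P$-almost surely on $\{Y_0 \geq S\}$, so it suffices to make the right-hand side at most $\theta$ by taking $S$ large.

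The main obstacle will be this last martingale-minimum estimate, because with only a conditional variance bound (no bounded or sub-Gaussian increments) Azuma's inequality is unavailable. My plan is instead to partition $\{1,2,\ldots\}$ dyadically and apply Kolmogorov's maximal inequality on each block: for $k \in [2^j, 2^{j+1})$, the event $N_k + Ak < -c'$ forces $|N_k| > c' + A\, 2^j$, so
\[
\mathbb P\!\left(\min_{k \in [2^j, 2^{j+1})}(N_k + Ak) < -c' \,\Big|\, \mathcal F_0\right) \;\leq\; \frac{B \cdot 2^{j+1}}{(c' + A\, 2^j)^2}.
\]
Summing over $j \geq 0$, and splitting at the index where $A\, 2^j \asymp c'$, the contribution from small $j$ is $O\!\bigl(\sum_{2^j \lesssim c'/A} B\, 2^j/c'^2\bigr) = O(B/(Ac'))$ and that from large $j$ is $O\!\bigl(\sum_{2^j \gtrsim c'/A} B/(A^2 2^j)\bigr) = O(B/(Ac'))$. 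Hence $\mathbb P(\inf_n (N_n + An) < -c' \mid \mathcal F_0) = O(B/(Ac'))$, which vanishes as $c' = S - c \to \infty$; choosing $S$ so that this bound is at most $\theta$ concludes the proof of \eqref{eq:appendix2}.
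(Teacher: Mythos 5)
Your proof is correct, and its broad strategy matches the paper's: stop the process at the first exit from $\{Y_n \geq a\}$, split off the predictable drift, use a martingale strong law for the liminf statement, and use a maximal-inequality estimate for the tail statement. The implementation differs at two points. For the liminf bound, the paper applies the stability Theorem~2.19 of Hall--Heyde via a domination argument with a random variable $\Xi$ satisfying $\mathbb{P}(\Xi>x)=x^{-2}$ for $x>1$; your explicit Doob decomposition of $Y_{n\wedge\tau}$ followed by Chow's martingale SLLN (equivalently Kolmogorov's inequality plus Kronecker's lemma) reaches the same conclusion more directly from the bounded-conditional-variance hypothesis. For the tail estimate, the paper replaces $Y$ by a modified process $\bar Y$ that obeys the drift and variance bounds globally (by letting $\bar Y$ increase deterministically by $A$ once $\tau$ has occurred), and then applies a Rajchman-type argument: Chebyshev along the subsequence $n^2$ combined with control of the fluctuations on each block $[n^2,(n+1)^2)$; it then transfers the estimate back to $Y$ using $\{\forall n:\bar Y_n\geq a\}=\{\forall n: Y_n\geq a\}$, which tacitly requires replacing $b$ by $\max(a,b)$ so that $\bar Y>b$ forces $\bar Y\geq a$ and hence $\bar Y=Y$. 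You instead keep the stopped process, raise the cut-off to $c=\max(a,b+1)$ so the hypotheses stay in force up to the stopping time, and bound the martingale minimum by dyadic blocking and Kolmogorov's maximal inequality, obtaining the explicit $O\bigl(B/(A\,c')\bigr)$ decay with $c'=S-c$. Both routes are standard and both work; yours gives a slightly cleaner quantitative bound and sidesteps the $b$-versus-$a$ bookkeeping in the transfer step, while the paper's globally modified process lets it quote a ready-made SLLN without re-examining the martingale-difference structure after $\tau$.
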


\begin{proof}
The proof of \eqref{eq:appendix2} follows an idea of Rajchman used to prove a strong law of large numbers, see \cite[Theorem 2.14]{Krengel}. A similar result with stronger assumptions is given in \cite[Lemma 2.6]{ScheuStein}.

We begin with the proof of the first statement and define $\tau:=\{n \in \mathbb N\mid Y_n < a\}$ as the first time our process jumps below the level $a$.

We will want to apply Theorem 2.19 from \cite{HH} to the sequence $((Y_{n+1} - \mathbb E[Y_{n+1}\mid\mathcal F_n])\mathbbm{1}_{\{\tau > n\}})_{n \in \mathbb N_0}$.

Therefore let $\Xi$ be a random variable such that $\mathbb P(\Xi \leq 1)=0$ and $\mathbb P(\Xi >x)=\frac{1}{x^2}$ for $x>1$. Then $\mathbb E[\Xi\log^+\Xi]<\infty$ and since
\begin{align*}
\label{eq:appHH}
 \mathbb P (\vert Y_{n+1} - \mathbb E[Y_{n+1}\mid\mathcal F_n]\vert\mathbbm{1}_{\{\tau>n\}}>x)	 & \leq  \left(\mathbb E[(Y_{n+1} - \mathbb E[Y_{n+1}\mid\mathcal F_n])^2\mathbbm{1}_{\{\tau>n\}}]\frac{1}{x^2}\right)\land1\\
												  &\leq \left(B \frac{1}{x^2}\right)\land1  = (B\lor1)\mathbb P(\Xi>x)
\end{align*}
for all $x>0$ and $n \in \mathbb N_0$ the assumptions of the theorem hold and we have
\begin{align}
 \frac{1}{n} \sum_{i =1}^n \left( Y_{i+1} - \mathbb E[Y_{i+1}\mid\mathcal F_i]\right)\mathbbm{1}_{\{\tau >i\}} & = \\
 \frac{1}{n} \sum_{i =1}^n((Y_{i+1} - \mathbb E[Y_{i+1}\mid\mathcal F_i])&\mathbbm{1}_{\{\tau >i\}}-\mathbb E[(Y_{i+1} - \mathbb E[Y_{i+1}\mid\mathcal F_i])\mathbbm{1}_{\{\tau >i\}}\mid \mathcal F_i]) \xrightarrow{n \rightarrow \infty} 0 \notag
\end{align}
$\mathbb P$-almost surely.

Now observe that
\begin{align*}
 \liminf_{n \rightarrow \infty} \frac{1}{n} Y_{n\land \tau}& = \liminf_{n \rightarrow\infty} \frac{1}{n}\sum_{i =1}^n (Y_i - Y_{i-1})\mathbbm{1}_{\{\tau>i-1\}} \\
									& = \liminf_{n \rightarrow\infty} \frac{1}{n}\sum_{i =1}^n ((Y_i - Y_{i-1})\mathbbm{1}_{\{\tau>i-1\}} - \mathbb E[(Y_i - Y_{i-1})\mathbbm{1}_{\{\tau>i-1\}}\mid \mathcal F_{i-1}] \\
									 & \qquad + \mathbb E[(Y_i - Y_{i-1})\mathbbm{1}_{\{\tau>i-1\}}\mid \mathcal F_{i-1}])\\
									 & = \liminf_{n \rightarrow\infty} \frac{1}{n}\sum_{i =1}^n ((Y_i - \mathbb E[Y_i\mid \mathcal F_{i-1}])\mathbbm{1}_{\{\tau>i-1\}} \\
									 & \qquad + \mathbb E[(Y_i - Y_{i-1})\mathbbm{1}_{\{\tau>i-1\}}\mid \mathcal F_{i-1}])\\
									 & \hspace{-5pt}\overset{\eqref{eq:appHH}}{=} \liminf_{n \rightarrow\infty} \frac{1}{n}\sum_{i =1}^n \mathbb E[(Y_i - Y_{i-1})\mathbbm{1}_{\{\tau>i-1\}}\mid \mathcal F_{i-1}]\\
									 & = \liminf_{n \rightarrow\infty} \frac{1}{n}\sum_{i =1}^n \underbrace{\mathbb (\mathbb E[Y_i \mid \mathcal F_{i-1}]- Y_{i-1})}_{\geq A}\mathbbm{1}_{\{\tau>i-1\}} \\
									 & \geq \liminf_{n \rightarrow\infty}  \frac{n\land \tau}{n}A
\end{align*}
and therefore
\begin{align*}
 \liminf_{n \rightarrow \infty} \frac{1}{n} Y_n \geq A \text{ on } \{\tau=\infty\}
\end{align*}
which proves the first statement.

To prove the second statement we start by considering a process $(\bar Y_n)_{n \in \mathbb N}$ with the same properties as $(Y_n)_{n \in \mathbb N_0}$, but without the restriction on the size of the predecessor, i.e. such that for all $n \in \mathbb N_0$
 \begin{enumerate}
  \item[(1')] $\mathbb E[\bar Y_{n+1}\mid\mathcal F_n] \geq \bar Y_n + A$ and
  \item[(2')] $\mathbb E[(\bar Y_{n+1} -\mathbb E[\bar Y_{n+1} \mid \mathcal F_n])^2\mid \mathcal F_n] \leq B$
 \end{enumerate}
$\mathbb P$-almost surely.

With this define
\begin{align*}
 h_{i+1}:= \bar Y_{i+1} - \mathbb E[\bar Y_{i+1} \mid \mathcal F_i],\qquad
 S_i:=\sum_{j = 1}^i h_j
\end{align*}
for all $i \in \mathbb N_0$.
Note that due to (2') we know
\begin{align*}
 \mathbb E[h_i^2\mid \mathcal F_0] \leq B \text{ and } \mathbb E[h_ih_j\mid \mathcal F_0]=0
\end{align*}
holds for every $i,j \in \N_0, i\neq j$ $\mathbb P$-a.s.
For arbitrary constants $c_1 \geq c_2 \geq 0$ and $\alpha_1 > \alpha_2 >0$ we can then estimate
\begin{align*}
 & \mathbb P(\exists \, m \in \N: \; S_m \leq -c_1 -\alpha_1 m\mid \mathcal F_0) 	\\
 & \qquad \leq \W(\exists\,n \in \N:\; S_{n^2} \leq -c_2 -\alpha_2 n^2\mid \mathcal F_0) \\
 & \qquad \qquad + \W(\exists \, n \in \N\,\exists\, m \in [n^2, (n+1)^2-1]:\; S_m -S_{n^2} \leq -(c_1-c_2) -(\alpha_1 - \alpha_2)n^2\mid \mathcal F_0)\\
 & \qquad \leq \sum_{n \in \N} \W(S_{n^2} \leq -c_2 -\alpha_2 n^2\mid \mathcal F_0) \\
 & \qquad \qquad + \sum_{n\in \N} \sum_{m = n^2}^{(n+1)^2 -1} \underbrace{\W( S_m -S_{n^2} \leq -(c_1-c_2) -(\alpha_1 - \alpha_2)n^2\mid \mathcal F_0)}_{\leq \frac{\sum_{i=n^2+1}^{m}\mathbb E[ h_i^2\mid \mathcal F_0]}{((c_1-c_2) + (\alpha_1 -\alpha_2)n^2)^2}}\\
 & \qquad \leq \underbrace{\sum_{n \in \N} \frac{B n^2}{(c_2 + \alpha_2 n^2)^2}}_{=:f(c_2, \alpha_2)} + \underbrace{\sum_{n \in \N}\frac{B n(2n+1)}{((c_1-c_2) + (\alpha_1 -\alpha_2)n^2)^2}}_{=:g(c_1-c_2,\alpha_1-\alpha_2)}
\end{align*}
$\mathbb P$-a.s.,where $\lim_{c_2 \rightarrow \infty}f(c_2, \alpha_2) = 0$ and $\lim_{c \rightarrow \infty}g(c, \alpha_1-\alpha_2)=0$.
This means that for every $\theta >0$ (and every choice of $\alpha_1>\alpha_2>0$) choosing $c_2$ large enough for $f(c_2,\alpha_2) \leq \frac{\theta}{2}$ and then $c_1$ large enough  such that $g(c_1-c_2, \alpha_1-\alpha_2) < \frac{\theta}{2}$ we have
\begin{align*}
 \W(\exists \, m \in \N: \; S_m \leq -c_1 -\alpha_1 m\mid \mathcal F_0) 	& \leq f(c_2, \alpha_2) + g(c_1-c_2,\alpha_1-\alpha_2) \leq \theta
\end{align*}
$\mathbb P$-a.s.
Using $\alpha_1:= A$ we obtain the following for our process $(\bar Y_n)_{n \in \N_0}$: For every $\theta>0$ and every point $b$ choosing $S:= c_1+b$ for $c_1$ as above we get that on $\{\bar Y_0\geq S\}$
\begin{align*}
 \W(\exists\, m \in \N:\; \bar Y_m \leq b\mid \mathcal F_0 )	& = \W(\exists\, m \in \N:\; \bar Y_0 + \sum_{i=1}^m (\bar Y_i-\bar Y_{i-1})  \leq b \mid \mathcal F_0)\\
						& \leq \W(\exists\, m \in \N:\; S + \sum_{i=1}^m (\bar Y_i-\bar Y_{i-1})  \leq b \mid \mathcal F_0) \\
						& \leq \W(\exists\, m \in \N:\; \sum_{i=1}^m (\bar Y_i-\bar Y_{i-1}) \leq b - S   \mid \mathcal F_0)\\
						& \leq \W(\exists\, m \in \N:\; \sum_{i=1}^m h_i \leq b-S - \sum_{i=1}^m \underbrace{\mathbb E[\bar Y_i-\bar Y_{i-1}\mid \mathcal F_{i-1}]}_{\geq A \text{ by (1')}}  \mid \mathcal F_0) \\
						& \leq \W(\exists\, m \in \N:\; \sum_{i=1}^m h_i \leq \underbrace{b-S}_{=-c_1} - m\underbrace{A}_{=\alpha_1} \mid \mathcal F_0) \leq \theta,
\end{align*}
which means that for every $\theta >0$ and $b \in \mathbb R$ we can find an $S$ such that
\begin{align*}
 \W(\forall \, m \in \N:\; \bar Y_m > b \mid \mathcal F_0)\geq 1-\theta \;\text{ on } \{\bar Y_0\geq S\}.
\end{align*}

Coming back to $(Y_n)_{n \in \N_0}$ use it to define such a process $(\bar Y_n)_{n \in \N_0}$ through $\bar Y_0:= Y_0$ and
\begin{align*}
 \bar Y_{n+1} -\bar Y_n :=\begin{cases}
                           Y_{n+1} -Y_n,	& \text{if } \tau >n\\
                           A,			& \text{otherwise.}
                          \end{cases}
\end{align*}
This process has the stronger properties (1') and (2') and since we also have $\{ \forall n\ \in \N_0: \bar Y_n \geq a\} = \{\forall n \in \N_0: Y_n \geq a\}$ the above observation yields the second statement.
\end{proof}

\vskip 0.6 truecm

{\bf Acknowledgments} The first--named author (U.U.J.) thanks the IMU Berlin Einstein Foundation Program (EFP) and the Berlin Mathematical School (BMS)
for a scholarship and for supporting his visit to the  Technische  Universit\"at (TU) Berlin, Germany and the Programme Erasmus Mundus Action 2 (EMA2) Marco XXI for a scholarship and for supporting his visit to the University of Santiago de Compostela (USC), Spain. He also thanks the TU Berlin and USC for the kind hospitality and for  providing all facilities.

\end{document}